\title{A Size Condition for Diameter Two Orientable Graphs}
\newtheorem{theorem}{Theorem}[section]
\newtheorem{lemma}{Lemma}[section]
\newtheorem{corollary}{Corollary}[section]
\newtheorem{definition}{Definition}[section]
\author[Cochran]{Garner Cochran}
\address{Garner Cochran\\Department of Mathematics and Computer Science\\ Berry College\\2277 Martha Berry Hwy NW\\  Mt Berry GA 30149\\ USA}
\email{gcochran@berry.edu}
\author[Czabarka]{\'Eva Czabarka}
\address{\'Eva Czabarka\\Department of Mathematics \\ University of South Carolina \\ Columbia SC 29212 \\ USA
\and Visiting Professor\\ Department of Pure and Applied Mathematics\\ University of Johannesburg\\South Africa}
\email{czabarka@math.sc.edu}
\author[Dankelmann]{Peter Dankelmann}
\address{Peter Dankelmann\\ Department of Pure and Applied Mathematics\\ University of Johannesburg\\South Africa}
\email{pdankelmann@uj.ac.za}
\author[Sz\'ekely]{L\'aszl\'o Sz\'ekely}
\address{L\'aszl\'o Sz\'ekely\\Department of Mathematics \\ University of South Carolina \\ Columbia SC 29212 \\ USA
\and Visiting Professor\\ Department of Pure and Applied Mathematics\\ University of Johannesburg\\South Africa}
\email{szekely@math.sc.edu}
\thanks{The first author was supported by a SPARC Graduate Research Grant from the Office of the Vice President for Research at the University of South Carolina. 
The third author was supported in part by the National Research Foundation of South Africa, grant number 103553. 
The fourth author was supported in part by the NSF DMS, grant number 1600811.}
\begin{document}

\maketitle

\begin{abstract}
It was conjectured by Koh and Tay 
[Graphs Combin.\ 18(4) (2002), 745--756] 
that for $n\geq 5$ every simple graph of order $n$ and size at
least $\binom{n}{2}-n+5$ has an orientation of diameter two.
We prove this conjecture and hence determine for every
$n\geq 5$ the minimum value of $m$ such that every
graph of order $n$ and size $m$ has an orientation
of diameter two.

\keywords{diameter \and oriented diameter \and orientation \and oriented graph \and distance \and size}
\end{abstract}

\section{Introduction}

This paper is concerned with the diameter of orientations of graphs. 
Given a graph $G$, an {\em orientation} $O_G$ of $G$ is a 
digraph obtained from $G$ by assigning a direction
to every edge of $G$. 
The {\em distance} between two vertices $u$ and $v$ in a graph or digraph $H$, 
denoted by $d_H(u,v)$, is the 
minimum length of a $(u,v)$-path in $H$; it is infinite if there is no
such path. The {\em diameter} of $H$ is the largest of the distances
between all pairs of vertices, it is denoted by ${\rm diam}(H)$. The
well-known Robbin's Theorem \cite{Rob1939} states that a connected graph has an orientation of
finite diameter if and only if it is bridgeless.
The {\em oriented diameter} of a graph is the minimum diameter of an 
orientation of $G$. 
 Chv\'{a}tal and Thomassen
\cite{ChvTho1978} showed that there is a function $f$ such that every 
bridgeless graph of diameter $d$ has an orientation of diameter at most $f(d)$. 
The determination of the exact values of this function appears 
extremely difficult. Chv\'{a}tal and Thomassen
\cite{ChvTho1978} showed that every bridgeless graph of diameter two
has an orientation of diameter at most six, and that this value is attained
by the Petersen graph, so $f(2)=6$. Already the value $f(3)$ is not known. 
Egawa and Iida \cite{EgaIid2007} and, independently, Kwok, Liu and West 
\cite{KwoLiuWes2010} showed that the oriented diameter of a bridgeless graph of diameter three
is at most $11$. 
In \cite{KwoLiuWes2010} an example of a graph of diameter $3$ and oriented
diameter $9$ was given. Hence $9 \leq f(3) \leq 11$. 
It was shown by Bau and Dankelmann  \cite{BauDan2015} that every bridgeless
graph of order $n$ and minimum degree $\delta$ has an orientation of diameter
at most $\frac{11n}{\delta+1} +O(1)$. Surmacs \cite{Sur2017} improved this bound
to $\frac{7n}{\delta+1} + O(1)$. An upper bound on the oriented diameter
terms of maximum degree was given by Dankelmann, Guo and Surmacs \cite{DanGuoSur2018}. 

Chv\'{a}tal and Thomassen
\cite{ChvTho1978} further showed that the problem of deciding whether a given graph
has an orientation of diameter two is NP-complete. Even for complete
multipartite graphs the problem which such graphs have an orientation
of diameter two has not been solved completely, see \cite{KohTan1996,KohTan19962}. 
Hence it is desirable to
have sufficient conditions that guarantee that a given graph has an 
orientation of diameter two.

In this paper we relate the existence of an orientation of diameter two
of a graph of given order to its size. 
F\"{u}redi, Hor\'ak, Pareek and Zhu \cite{FurHorParZhu1998} gave an
asymptotically sharp lower bound on the number of edges in a graph
of given order 
that admits an orientation of diameter two. The purpose of this paper 
is to determine for every $n\geq 5$ the
minimum value $m(n)$ such that every simple graph of order $n$ and size 
at least $m(n)$ has an orientation of diameter two. 

For $n\geq 5$, the graph $G_n$, obtained from a complete graph on $n-1$ 
vertices by adding a new vertex $v$ and edges joining $v$ to three 
vertices in the complete graph, does
not have an orientation of diameter two. 
Indeed, suppose to the contrary that $G_n$ has an orientation $O_n$ of 
diameter two. Then $v$ has either two in-neighbors and one out-neighbor, 
or vice versa. We may assume the former.
Let $u$ be the out-neighbor and $y_1,y_2$ be the two in-neighbors of $v$ in $O_n$. 
Since every vertex is at distance at most two from $v$ in $O_{n}$, for every vertex $w\in V(G_n)-\{u, v\}$ 
the edge $uw$ is oriented from $u$ to $w$. Hence, if $x \in V(G_n)-\{u,v,y_1,y_2\}$ 
any $(x,u)$-path in $O_n$ goes through $v$ and has thus
length at least three, a contradiction to $O_n$ having diameter two.  
Hence $G_n$ has no orientation of diameter two. It follows that
$m(n) \geq m(G_n) + 1 = \binom{n}{2}-n+5$ for $n\geq 5$. This was observed by
Koh and Tay \cite{KohTay2002}, who conjectured that 
this construction is best possible, and so 
$m(n) = \binom{n}{2}-n+5$ for $n\geq 5$. 
It is the aim of this paper to show that this conjecture is true
by proving the following theorem.

\begin{theorem} \label{theo:main-result}
Let $G$ be a simple graph of order $n$, where $n\geq 5$, and size 
at least $\binom{n}{2}-n+5$. Then $G$ has an orientation of diameter two.  
\end{theorem}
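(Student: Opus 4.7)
The plan is to work with the complement $H = \overline{G}$, which has at most $n - 5$ edges, and explicitly orient the edges of $G$ so that every ordered pair of distinct vertices is joined by a directed path of length at most two. A preliminary edge-counting bound gives $\delta(G) \geq 4$: if some vertex $v$ had $\deg_G(v) \leq 3$, then $G - v$ would have at least $\binom{n}{2} - n + 5 - 3 = \binom{n-1}{2} + 1$ edges on only $n - 1$ vertices, which is impossible. Throughout the argument I would use the local criterion that an orientation $O$ has diameter at most $2$ iff for every ordered pair $(x, y)$ either $xy$ is an arc of $O$ or some vertex $z$ satisfies $x \to z \to y$ in $O$.

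The proof is organized as a case analysis driven by $\Delta(H)$. The extremal graph $G_n$ from the introduction corresponds to $H = K_{1, n-4}$, so the most delicate situation is $\Delta(H) = n - 5$: the edge count forces \emph{all} $n - 5$ edges of $H$ to be incident to a single vertex $v$, yielding $\deg_G(v) = 4$ with $G$-neighbors $a_1, a_2, a_3, a_4$ and $G - v = K_{n-1}$. A short local analysis of $v$'s in-out split rules out the uneven distributions $(1, 3)$ and $(3, 1)$ (the sole out-neighbor would have to dominate $V(G) \setminus \{v\}$, so distances back to it would exceed two), forcing the balanced orientation $v \to a_1$, $v \to a_2$, $a_3 \to v$, $a_4 \to v$. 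The task then reduces to producing a tournament $T$ on $V(G) \setminus \{v\}$ satisfying (i) $\operatorname{diam}(T) \leq 2$, (ii) every vertex outside $\{a_1, a_2\}$ is an out-neighbor in $T$ of $a_1$ or $a_2$ (so $v$ reaches it in two steps), and (iii) every vertex outside $\{a_3, a_4\}$ is an in-neighbor in $T$ of $a_3$ or $a_4$ (so it reaches $v$ in two steps). I would construct $T$ explicitly, placing $a_1, a_2$ as high-out-degree vertices and $a_3, a_4$ as high-in-degree vertices, and completing with a diameter-two tournament (for example a rotational tournament) on the remaining $n - 5$ vertices when $n$ is large enough.

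For the complementary range $\Delta(H) \leq n - 6$ one has $\delta(G) \geq 5$ and the non-edges are more evenly spread, giving considerable slack. I would handle this regime by further sub-cases depending on whether $H$ is a matching (a single symmetric orientation suffices) or contains a vertex of moderate $H$-degree (partition $V(G)$ using this vertex's $H$-neighborhood, orient the two parts by diameter-two tournaments, and cross-orient across the partition using the many $G$-edges between them). The main obstacle I expect is the critical case $\Delta(H) = n - 5$: conditions (i), (ii), (iii) must be met simultaneously, and the construction becomes tight when $n$ is small and the tournament on $V(T) \setminus \{a_1, a_2, a_3, a_4\}$ has too few vertices to admit a diameter-two orientation generically. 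Small base cases (say $n \leq 8$) should be verified separately, and a secondary bookkeeping burden is certifying uniformly across cases that every pair of vertices ends at distance at most two.
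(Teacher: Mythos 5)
Your proposal takes a genuinely different route from the paper, but it has two substantial gaps that prevent it from being a proof.

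The paper assumes a minimal counterexample (minimum order, then minimum size), analyzes the \emph{components} of the complement $B=\overline{G}$, and shows through a chain of reduction lemmata that every component of $B$ is one of a small list of graphs (paths on at most four vertices, $K_i$, proper dumbbells, proper short dumbbells, $C_5$) of order at most six; the remaining cases are then dispatched by a library of explicitly constructed orientations built from the spanning-$K_{a,b}$ lemma. You instead organize by $\Delta(H)$ and attempt a direct construction. Your initial observations are sound: $\delta(G)\geq 4$ follows from the edge count, and when $\Delta(H)=n-5$ the complement is forced to be $K_{1,n-5}$ plus isolated vertices, so $G-v$ is complete; your ruling-out of the $(1,3)$ and $(3,1)$ in/out splits at $v$ is also correct. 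But the argument stops being a proof precisely where the real work begins.

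First, in your critical case you reduce to finding a tournament $T$ on $n-1$ vertices of diameter two such that $a_1,a_2$ jointly dominate everything else and everything else jointly dominates $a_3,a_4$. You assert this can be done ``by placing $a_1,a_2$ as high-out-degree vertices and $a_3,a_4$ as high-in-degree vertices and completing with a rotational tournament,'' but you give no construction and no verification that (i), (ii), (iii) can all be met simultaneously --- and this is not automatic. Condition (iii) forbids $a_3$ and $a_4$ from having any common out-neighbor outside $\{a_3,a_4\}$, while (i) still demands directed two-paths between all pairs, including from each $a_i$ to each $a_j$; these pull against each other, and the existence of such a $T$ needs an actual argument, especially for small $n$ where there is little room (recall that no tournament on four vertices even has diameter two). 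Second, and more seriously, the entire regime $\Delta(H)\leq n-6$ is not handled. Your text offers only ``further sub-cases depending on whether $H$ is a matching or contains a vertex of moderate $H$-degree'' and a one-sentence sketch of a partition idea. But $H$ can be any graph on $n$ vertices with at most $n-5$ edges and bounded maximum degree --- disjoint unions of stars, paths, trees, cycles, and more --- and the bulk of the difficulty in the theorem (and of the paper's proof) lies exactly in taming this variety. As written, the proposal identifies the extremal configuration correctly and makes a plausible start on it, but it does not constitute a proof of the theorem.
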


Our proof of Theorem \ref{theo:main-result} consists of a sequence of lemmata.
An outline of the proof is as follows. 
We suppose to the contrary  that the theorem is false and that $G$ is a counterexample of minimum 
order, and among those, minimum size. Our proof focuses on the complement $\overline{G}$ of $G$, defined 
as the graph on the same vertex set as $G$, where two vertices are adjacent in 
$\overline{G}$ if and only if they are not adjacent in $G$. 

In Section \ref{section:sufficient conditions for an orientation of diameter two} we give 
some sufficient conditions for graphs to have an orientation
of diameter two, and we present several graphs that have an orientation
of diameter two. 
In Section \ref{section:some properties of B} we present some properties of
the graph $\overline{G}$ that will be useful later; in particular we show that 
each component of $\overline{G}$ contains neither three independent vertices nor
two non-adjacent vertices that share more than one neighbour.  These results,
together with some results in Section \ref{sec:On Tree Components of $B$} on the components 
of $\overline{G}$ that are trees, will be used in Section \ref{section:describing the components of B}
to show that the components of $\overline{G}$ are short paths, and possibly an additional component that is one of four 
types of graphs on at most $6$ vertices. 
In Section~\ref{sec:main-result} we complete the proof by showing that the presence of any of these four types of graphs either
allows us to apply certain reductions to the graph $G$ to obtain a smaller 
counterexample $G^{\prime}$, or that $G$ is one of the graphs in the list of graphs with 
an orientation of diameter two presented in 
Section \ref{section:sufficient conditions for an orientation of diameter two}, 
so $G$ is not a counterexample.  Finally, we conclude the proof by dealing with the case that all components of $\overline{G}$ are trees.

\section{Notation}
All graphs and digraphs in this paper have neither loops nor multiple edges, i.e. they are unoriented or oriented simple graphs. 
Let $G$ be a graph of order $n=n(G)$ and size $m=m(G)$.
We define $G_1=(V_1,E_1)$ to be a subgraph of $G_2=(V_2,E_2)$ when $V_1\subseteq V_2$ and $E_1\subseteq E_2$. We denote this as $G_1\trianglelefteq G_2$. 
We define the {\em excess} of  $G$
by ${\rm ex}(G)= m(G) - n(G)$. 
We find it convenient to consider $G$ and $\overline{G}$ as 
obtained by colouring the edges of a complete graph on $n$ vertices either red or blue, 
with the edges of $G$ being the red, and the edges of $\overline{G}$ 
as blue edges. Accordingly, we usually denote
$G$ as $R$, and $\overline{G}$ as $B$. We denote the vertex set common 
to $R$ and $B$ by $V$.  If $W\subseteq V$,  then 
the red and blue subgraph induced by $W$ in $R$ and $B$, respectively,
is denoted by $R[W]$ and $B[W]$.  

Let $u, v$ be vertices  of a graph $G$ or digraph $O_G$. If $uv \in E(G)$ then 
we say that $u$ and $v$ are adjacent in $G$ and that $u$ is a neighbor of $v$.
The set of all neighbors of $v$ is the neighborhood of $v$ in $G$, denoted by
$N_G(v)$. The closed neighborhood $N_G[v]$ of $v$ in $G$ is defined as $N_G(v) \cup \{v\}$.   
If $\overrightarrow{uv}$ is a directed edge of $O_G$, then we say that $v$ is an 
out-neighbor of $u$ and that $u$ is an in-neighbor of $v$.  
The {\em degree} of vertex $v$ in $G$ is the number of neighbors of $v$, 
it is denoted by ${\rm deg}_G(v)$.

By $K_n$, $P_n$, $C_n$, and $K_{a,b}$ we mean the complete graph on $n$ vertices, 
the path on $n$ vertices, the cycle on $n$ vertices, and the complete bipartite graph whose partite
sets have $a$ and $b$ vertices, respectively. If $G$ and $H$ are graphs, then $G\cup H$ is the 
disjoint union of $G$ and $H$. If $a$ is a positive integer, then 
$aG$ is the disjoint union of $a$ copies of $G$, so the edgeless graph on $n$ 
vertices is denoted by $nK_1$. 

If $U$ and $W$ are disjoint subsets of $V$ then $U \rightarrow W$
indicates that for all $x\in U$ and $y \in W$ that are adjacent in $R$  we 
orient the edge $xy$ as $\overrightarrow{xy}$, i.e., from $x$ to $y$. 
We write $u \rightarrow W$ instead of $\{u\}\rightarrow W$, 
and similarly $U \rightarrow w$ and $u \rightarrow w$ instead of
$U\rightarrow\{w\}$ and $\{u\}\rightarrow\{w\}$. 

If $A, B$ are sets of vertices in $H$, then their distance, $d_H(A,B)$, is defined as the Hausdorff distance
$\min_{u\in A, v\in B} d_H(u,v)$. $d_H(u,B)$ and $d_H(A,v)$ are
defined analogously. 

As usual, $[n]=\{1,2,3,\ldots,n\}$ and for a set $A$ and 
$k\in\mathbb{N}$, $\binom{A}{k}$ is the collection of $k$-element subsets of $A$.

\begin{definition} 
Let $k,\ell\in\mathbb{Z}^+$. A $(k,\ell)$-{\em dumbbell}, denoted by $D_{k, \ell}$, 
is a graph of order $k + \ell$ obtained from the disjoint union of two complete graphs 
$K_k$ and $K_{\ell}$ by adding an edge joining a vertex of $K_k$ to
a vertex of $K_{\ell}$. A {\em short} $(k,\ell)$-{\em dumbbell}, denoted by
$S_{k, \ell}$, is a graph of order 
$k+\ell-1$ obtained from the disjoint union of two complete graphs 
$K_k$ and $K_{\ell}$ by identifying a vertex of $K_k$ and 
a vertex of $K_{\ell}$.  \\
A $(k,\ell)$-dumbbell is {\em proper} if it not a tree, i.e., if $\max(k,\ell)\ge 3$.
A short $(k,\ell)$-dumbbell is {\em proper} if it is neither complete, nor a tree,
nor a dumbbell, i.e., if $\min(k,\ell)\ge 3$.
\end{definition}

Note that a $(k,\ell)$-dumbbell is a tree if and only if $\max(k,\ell)\le 2$, 
The dumbbells that are trees are paths $P_i$ on $2 \leq i\le 4$ vertices. 
A short $(k,\ell)$-dumbbell is a dumbbell or a complete graph if and only
if $\min(k,\ell)\le 2$.

\section{Sufficient conditions for a diameter two orientation} 
\label{section:sufficient conditions for an orientation of diameter two}

In this section we present a few sufficient conditions for the existence of a diameter two orientation of a graph. 
Using these conditions we obtain a list of several graphs that have diameter two orientations. This list will be used extensively in later sections. 

\begin{definition} 
Let $W \subseteq V$. An orientation $O_W$ of $R[W]$ is {\em good} if
there exists a partition of $W$ into two sets $U_1$ and $V_1$, which 
we call the {\em partition classes} of $W$ (or of $O_W$), such that \\
{\rm (i)} $d_{O_W}(x,y)\leq 2$ whenever $x$ and $y$ are both in $U_1$ or both in $V_1$. \\
If in addition  \\
{\rm (ii)} every vertex in $U_1$  has an in-neighbor and an out-neighbor 
in $V_1$ and vice versa,  \\
then $O_W$ is a {\em non-trivial good orientation}. If $R[W]$ has a (non-trivial) good orientation,
then we sometimes say simply that
$W$ has a (non-trivial) good orientation.
\end{definition}

The following lemma is based on a construction of digraphs of diameter
two with no $2$-cycles having close to the minimum number or edges
by  F\"{u}redi, Hor\'{a}k, Pareek and Zhu \cite{FurHorParZhu1998}.

\begin{lemma}\label{lem:Kab} 
Let $a,b \in \mathbb{N}$ with $2\le a\le b\le\binom{a}{\lfloor a/2\rfloor}$. 
If $R[W]$ contains $K_{a,b}$ as a spanning subgraph, then $R[W]$ has a non-trivial good orientation. 
If $R[W]$ is isomorphic to $K_{1,1}$, then $R[W]$ has a good orientation. 
\end{lemma}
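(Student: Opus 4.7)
The plan is as follows. The $K_{1,1}$ case is immediate: orient the single edge arbitrarily and take the two endpoints as the partition classes $U_1$, $V_1$; condition (i) holds vacuously, so the orientation is good (but not non-trivial, which matches the lemma's phrasing).

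For the main case, let $A$ and $B$ be the partite sets of the $K_{a,b}$ in $R[W]$, with $|A|=a$ and $|B|=b$, and take $U_1:=A$, $V_1:=B$. I would build the orientation via the F\"uredi--Hor\'ak--Pareek--Zhu idea: assign to each $b_j \in B$ a distinct subset $S_j \subseteq A$ of size $k := \lfloor a/2 \rfloor$, and orient the bipartite edges by $a_i \to b_j$ if $a_i \in S_j$ and $b_j \to a_i$ otherwise. Any additional edges of $R[W]$ inside $A$ or inside $B$ (if $K_{a,b}$ is a proper spanning subgraph) are oriented arbitrarily; they can only decrease distances, so they do not interfere. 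The hypothesis $b \le \binom{a}{k}$ ensures that $b$ distinct $k$-subsets of $A$ are available.

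The crux is to choose the $S_j$'s to form a \emph{separating family}: for every ordered pair $(a_i,a_{i'})$ with $i\ne i'$, some $S_j$ satisfies $a_i\in S_j$ and $a_{i'}\notin S_j$. Granting this, the verifications are routine. For (i), two vertices $a_i,a_{i'}\in A$ are joined by the directed path $a_i\to b_j\to a_{i'}$ for any separating $S_j$; two vertices $b_j,b_{j'}\in B$ are joined by $b_j\to a_i\to b_{j'}$ for any $a_i\in S_{j'}\setminus S_j$, which exists because $S_j\ne S_{j'}$ and $|S_j|=|S_{j'}|$. For (ii), each $a_i$ lies in some $S_j$ (use separation of $(i,i')$) and avoids some $S_{j'}$ (use separation of $(i',i)$), so $a_i$ has both in- and out-neighbors in $B$; since $1\le k\le a-1$ for $a\ge 2$, each $b_j$ has both in- and out-neighbors in $A$.

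The one real obstacle is producing the separating family for every admissible $b$. When $b=\binom{a}{k}$ I would use all $k$-subsets, and separation is automatic since $\binom{a-2}{k-1}>0$ (for $a\ge 2$) gives a witness $S$ for every pair. For smaller $b$, I would argue that a separating family can be obtained by deleting $k$-subsets from the full collection one at a time while preserving separation, using a counting argument: a pair $(i,i')$ becomes unseparated only if the last witness $k$-subset is removed, and a simple count shows that while the total number of $k$-subsets present exceeds a certain threshold (satisfied throughout the range $a\le b$) some removable $k$-subset remains. Alternatively, for small $b$ close to $a$, an explicit cyclic construction such as $S_j=\{j,j+1,\dots,j+k-1\}\pmod a$ can be checked directly to be separating, bridging the gap to the full family.
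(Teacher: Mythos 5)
Your orientation scheme and verification of conditions (i) and (ii) are correct and match the paper's in all essentials: assign to each vertex of the large side $B$ a distinct $k$-subset of the small side $A$ with $k=\lfloor a/2\rfloor$, orient bipartite edges by membership, and check that the family separates ordered pairs of $A$. The $K_{1,1}$ case is handled the same way.

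The genuine gap is in your construction of the separating family for intermediate values of $b$. Your primary plan is to start from the full collection of $\binom{a}{k}$ $k$-subsets and delete one at a time ``while preserving separation,'' claiming a simple count keeps some removable set available throughout $a\le b$. But the natural count does not give what you need: a set fails to be removable when it is the \emph{unique} witness for some ordered pair $(i,i')$, and since there are $a(a-1)$ ordered pairs, up to $a(a-1)$ sets could simultaneously be non-removable. That only guarantees a removable set exists while the family has more than $a(a-1)$ members, which for $a\ge 3$ is strictly larger than the target $b=a$. So greedy deletion may stall well before reaching $b$, and the ``certain threshold'' you invoke is not justified. Your ``alternative'' — the cyclic family $S_j=\{j,j+1,\dots,j+k-1\}\pmod a$ — is in fact the right idea, but you frame it as a fallback ``for small $b$ close to $a$.'' The clean argument (and what the paper does) is to \emph{always} include the $a$ cyclic intervals as the first $a$ subsets: this base is already separating (one must check the two cases $i'-i\le k-1$ and $i'-i>k-1$ modulo $a$, as the paper does), and separation is monotone under adding further subsets, so the remaining $b-a$ subsets can be chosen to be arbitrary distinct $k$-subsets, which exist precisely because $b\le\binom{a}{k}$. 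If you restate your alternative in that monotone form, the gap closes and your proof becomes the paper's proof.
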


\begin{proof} Any orientation of $K_{1,1}$ is vacuously good, so it suffices to show that $K_{a,b}$ has a non-trivial good orientation
for all $2\le a\le b\le\binom{a}{\lfloor a/2\rfloor}$.

Let the partite classes of $K_{a,b}$ be $U_1=\{x_1,\ldots, x_a\}$ and 
$V_1=\{y_1,\ldots, y_b\}$ and set $c=\lfloor \frac{a}{2}\rfloor-1$. Consider an injection
$f:[b]\rightarrow\binom{[a]}{c+1}$ such that for $i\in[a]\subseteq[b]$ we have $f(i)=\{i,\ldots,i+c\}$, where numbers in $f(i)$ are taken modulo $a$.
Such an injection exists by the conditions on $a$, $b$ and $c$. Orient the edge
$y_i x_j$ as $\overrightarrow{y_ix_j}$ if $j\in f(i)$, and as $\overrightarrow{x_jy_i}$ otherwise. 
For $i\ne k, i,k\in[b]$, both $f(i)\setminus f(k)$ and $f(k)\setminus f(i)$ are nonempty, 
ensuring a directed path of length $2$ in both directions between $y_i$ and $y_k$. 

Now take $i,k$ such that 
$1\le i<k\le a$.
If $k-i\le c$,
 let $\ell\in[a]$ such that $\ell\equiv k+c\mod a$; 
 we have that
$i\in f(i)\setminus f(k)$ an $\ell\in f(k)\setminus f(i)$.
If $k-i> c$, let $\ell=i+c$; we have that $k\in f(k)\setminus f(i)$ and $\ell\in f(i)\setminus f(k)$.
This ensures a directed path of length $2$ 
in both directions between $x_i$ and $x_k$. So $K_{a,b}$ has a good orientation.

As every vertex $y_i\in V_1$ has $\lfloor \frac{a}{2} \rfloor$ in-neighbors 
and $\lceil \frac{a}{2} \rceil$ out-neighbors in $U_1$, it has at least one of each. For each $x_i\in U_1$, the arc $\overrightarrow{y_ix_i}$ exists, and the arc $\overrightarrow{x_iy_{i-1}}$ exists. Hence $K_{a,b}$ has a non-trivial good orientation. 
\end{proof}

\begin{definition} Let $\ell\ge k$ be positive integers. We define $K_\ell\boxplus K_k$ as the disjoint union of $K_\ell$ and $K_k$ together with a set of edges $M^{\star}$ that
match every vertex of $K_k$ to a vertex of $K_{\ell}$. 
\end{definition}

\begin{lemma}\label{lem:Kabsubgraph} 
Let $a,b \in \mathbb{N}$ with $3\le a\le b\le 2a$.
If $R[W]$ contains $K_{a,b}$ as a spanning subgraph with partite sets $X$ and $Y$ such that  $B[Y]\trianglelefteq K_a\boxplus K_{b-a}$,
 then $R[W]$ has a non-trivial good orientation. 
\end{lemma}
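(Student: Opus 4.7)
My plan is to construct a non-trivial good orientation of $R[W]$ with partition classes $U_1=X$ and $V_1=Y$ via a single explicit set system, exploiting the red edges inside $Y$ guaranteed by the hypothesis $B[Y]\trianglelefteq K_a\boxplus K_{b-a}$.

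I would partition $Y=Y_a\cup Y_{b-a}$ with $|Y_a|=a$ and $|Y_{b-a}|=b-a$ compatibly with $B[Y]\trianglelefteq K_a\boxplus K_{b-a}$, write $X=\{x_1,\ldots,x_a\}$, $Y_a=\{y_1,\ldots,y_a\}$, $Y_{b-a}=\{z_1,\ldots,z_{b-a}\}$, and relabel so that the matching of $K_a\boxplus K_{b-a}$ is $\{y_iz_i:i\in[b-a]\}$; in particular $R[Y]$ contains every edge $y_jz_i$ with $i\in[b-a]$ and $j\in[a]\setminus\{i\}$. In the style of Lemma~\ref{lem:Kab}, I would set $f(i)=\{i\}$ for $i\in[a]$, $f(a+i)=[a]\setminus\{i\}$ for $i\in[b-a]$, adopt the convention $z_i=y_{a+i}$, and orient each edge of the spanning $K_{a,b}$ by $\overrightarrow{y_ix_j}$ iff $j\in f(i)$. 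Each non-matching red edge $y_jz_i$ (so $j\ne i$) would be oriented as $\overrightarrow{y_jz_i}$, and all remaining red edges (inside $X$, inside $Y_a$, or inside $Y_{b-a}$) arbitrarily.

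The verification is direct. For distinct $j,k\in[a]$, the paths $y_j\to x_j\to y_k$ and $x_j\to y_k\to x_k$ give distance $\le 2$ within $Y_a$ and within $X$; for distinct $j,k\in[b-a]$, the path $z_j\to x_k\to z_k$ does the same within $Y_{b-a}$; each matching pair $y_i,z_i$ is handled by $y_i\to x_i\to z_i$ and $z_i\to x_\ell\to y_i$ for any $\ell\in[a]\setminus\{i\}$; and each non-matching pair $y_j,z_i$ with $j\ne i$ is covered by the direct arc $\overrightarrow{y_jz_i}$ and the length-$2$ path $z_i\to x_\ell\to y_j$ with $\ell\in[a]\setminus\{i,j\}$. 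For the non-trivial condition: each $y_i$ has in-neighbors $X\setminus\{x_i\}$ and out-neighbor $x_i$; each $z_i$ has in-neighbor $x_i$ and out-neighbors $X\setminus\{x_i\}$; each $x_j$ has $y_j$ as an in-neighbor and any $y_k$ with $k\ne j$ as an out-neighbor.

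The main potential obstacle is merely that the required auxiliary indices must exist, which is ensured by the hypothesis $a\ge 3$: this guarantees $[a]\setminus\{i,j\}\ne\emptyset$ for distinct $i,j\in[a]$ and $|X\setminus\{x_i\}|\ge 2$, so the non-trivial condition holds for both classes. The construction thus works uniformly for all $a,b$ with $3\le a\le b\le 2a$, without any case analysis.
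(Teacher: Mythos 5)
Your construction is correct, and it is essentially the paper's orientation with all arcs reversed: where the paper sends $x_i\to y_i$, $y_j\to x_i$ ($j\neq i$), $y_{a+i}\to x_i$, $x_i\to y_{a+j}$ ($i\neq j$), and $y_{a+i}\to y_j$, you take the opposite direction on every edge (with $z_i=y_{a+i}$), and since being a non-trivial good orientation is invariant under reversal, the two arguments are the same up to this symmetry and the repackaging via the set system $f$.
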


\begin{proof}
Let $W=X\cup Y$ where $X=\{x_1,\ldots,x_a\}$, $Y=\{y_1,\ldots,y_{b}\}$. It suffices to prove that $R[W]$ has a non-trivial good orientation
when the edges of $B$ are the union of the edges of the complete graphs on $X$, $\{y_1,\ldots,y_a\}$ and $\{y_{a+1},\ldots,y_{b}\}$ together with the edges
$\{y_iy_{a+i}:i\in[b-a]\}$.

We will provide an appropriate orientation of the red edges.

For $i\in [a]$, orient the edges $x_iy_i$ as $\overrightarrow{x_iy_i}$. For $i,j\in [a]$, where $i\neq j$, orient the edges $x_iy_j$ as $\overrightarrow{y_jx_i}$. 
Note that, as $a>2$, this already ensures that for all $i,j\in[a]$, there is a path of length at most two from $x_i$ to $x_j$ and from $y_i$ to $y_j$, and vertices in
$\{x_1,\ldots,x_1,y_1,\ldots,y_a\}$ have both an in-neighbor and an out-neighbor in $R$.

For $i\in [b-a]$, orient the edges $x_iy_{a+i}$ as $\overrightarrow{y_{a+i}x_i}$. 
For $i,j\in [b-a]$, $i\ne j$, orient the edges $x_iy_{a+j}$ as $\overrightarrow{x_iy_{a+j}}$. This ensures that
for all $i,j\in[b-a]$ and $j\in[a]\setminus\{i\}$ there is an oriented path of length at most two from $y_{a+i}$ to $y_{a+j}$ and from $y_{a+i}$ to $y_i$ (through $x_i$); 
and all vertices of $W$ have an in-neighbor and an out-neighbor in $R$.

For $i\in[a]\setminus[b-a]$ and $j\in[b-a]$, orient the edges $x_iy_{a+j}$ as  $\overrightarrow{x_iy_{a+j}}$. 
This ensures that for all $j\in[b-a]$ and $k\in[a]$ there is an oriented path from $y_k$ to $y_{a+j}$ (through an $x_{\ell}$ where $\ell\in[a]\setminus\{k,j\}$).

Finally, for $i,j\in [b-a]$, with $i\neq j$, orient the edges $y_{a+i}y_j$ as $\overrightarrow{y_{a+i}y_j}$. The resulting
orientation of $R[W]$ is non-trivially good.
\end{proof}

\begin{corollary}\label{cor:paths}
	For a vertex set $W\subseteq V$, if $B[W]$ is a disjoint union of paths and
	the components of $B[W]$ can be partitioned into sets $X$ and $Y$ such that $|X|=a$ and $|Y|=b$ 
	for some $3\leq a\leq b\leq 2a$, then $R[W]$ has a non-trivial good orientation.
\end{corollary}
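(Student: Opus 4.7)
The plan is to apply Lemma \ref{lem:Kabsubgraph} directly with the given partite sets $X$ and $Y$. Since $X$ and $Y$ are unions of distinct components of $B[W]$, there are no blue edges between them, so every edge joining a vertex of $X$ to one of $Y$ is red; thus $R[W]$ contains $K_{a,b}$ as a spanning subgraph with partite sets $X$ and $Y$. The task therefore reduces to verifying the structural condition $B[Y]\trianglelefteq K_a\boxplus K_{b-a}$ required by Lemma \ref{lem:Kabsubgraph}.

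The substantive step is to construct a bipartition $Y=Y_1\sqcup Y_2$ with $|Y_1|=a$ and $|Y_2|=b-a$ that realizes this embedding. Since $B[Y]$ is a disjoint union of paths, I would process each path component $P=v_1v_2\cdots v_\ell$ separately, choosing a split index $0\le j\le \ell$ and placing $v_1,\ldots,v_j$ into $Y_1$ and $v_{j+1},\ldots,v_\ell$ into $Y_2$. Such a prefix/suffix labeling creates at most one blue cross edge per path, namely $v_jv_{j+1}$ when $0<j<\ell$, and since distinct path components are vertex disjoint, the cross edges from all path components together form a matching in $B[Y]$. A single path of length $\ell$ can contribute any number of $Y_2$-vertices in $\{0,1,\ldots,\ell\}$, so greedily selecting split indices across the paths allows the total $|Y_2|$ to equal any value in $\{0,1,\ldots,b\}$, in particular the required $b-a$.

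Finally I would verify that this yields $B[Y]\trianglelefteq K_a\boxplus K_{b-a}$: the blue edges inside $Y_1$ and inside $Y_2$ automatically fit into $K_a$ and $K_{b-a}$ respectively, and the cross edges form a matching of size at most $|Y_2|=b-a$. Because $|Y_1|=a\ge b-a=|Y_2|$, which is the sole place where the hypothesis $b\le 2a$ is used, this matching can be extended to a full matching $M^{\star}$ saturating $Y_2$, producing a copy of $K_a\boxplus K_{b-a}$ containing $B[Y]$. Applying Lemma \ref{lem:Kabsubgraph} then gives the desired non-trivial good orientation of $R[W]$. The only real choice in the argument is the prefix/suffix split for each path; the main obstacle is simply recognizing that this naive labeling already suffices and that the inequality $b\le 2a$ is exactly what permits completion of the cross-edge matching.
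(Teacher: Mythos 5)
Your proposal is correct and follows essentially the same route as the paper's proof: both reduce to Lemma~\ref{lem:Kabsubgraph} by observing that $X$ and $Y$ are joined completely by red edges, and then exhibiting a partition of $Y$ into parts of sizes $a$ and $b-a$ for which the blue cross edges form a (partial) matching, so that $B[Y]\trianglelefteq K_a\boxplus K_{b-a}$. The paper phrases this slightly more compactly by noting $B[Y]\trianglelefteq P_b\trianglelefteq K_a\boxplus K_{b-a}$, whereas you build the prefix/suffix split path-by-path, but the underlying idea is identical.
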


\begin{proof}
	Let $B[W]$ be the disjoint union of paths which can be partitioned into sets $X$ and $Y$ such that $|X|=a$ and $|Y|=b$ where $3\leq a\leq b\leq 2a$. Then $R[W]$ has $K_{a,b}$ as spanning subgraph with partite sets $X$ and $Y$. Moreover, $Y$ can be partitioned into two sets $Y_a$ and $Y_{b-a}$ of cardinality $a$ and $b-a$ respectively, such that $B[Y]$ contains at most one edge joining a vertex in $Y_a$ to a vertex in $Y_{b-a}$. Hence, $B[Y]\trianglelefteq P_b\trianglelefteq K_a\boxplus K_{b-a}.$
\end{proof}

\begin{lemma}\label{lem:2orient}
Assume that $V$ can be partitioned into two disjoint sets $W$ and $Z$ so that there is no edge in $B$ joining a vertex in $W$ to a vertex in $Z$. Furthermore, assume that $R[W]$ has a non-trivial good orientation, and one of the following holds for $Z$:\\
{\rm (i)} $Z$ has a non-trivial good orientation, or\\
{\rm (ii)} $|Z|= 3$ and the vertices in $Z$ are isolated in $B$, or\\ 
{\rm (iii)} $|Z|=2$, \\ 
then $R$ has an orientation of diameter $2$. 
\end{lemma}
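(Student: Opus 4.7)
The plan is to, in each of the three cases, orient the edges of $R$ by combining the given non-trivial good orientation of $R[W]$ (with partition classes $U_1, V_1$) with an appropriate orientation of $R[Z]$ and of the complete red bipartite graph between $W$ and $Z$, then verify that every ordered pair of vertices has distance at most $2$ in the resulting digraph. The bipartite graph between $W$ and $Z$ is complete in $R$ because $B$ has no crossing edges, so we are free to orient every such edge.

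For case (iii) with $Z = \{z_1, z_2\}$, I would orient $z_1 \to U_1$, $V_1 \to z_1$, $U_1 \to z_2$, $z_2 \to V_1$, and the edge $z_1 z_2$ (if present in $R$) arbitrarily. Every ordered pair of vertices then has a length-$\le 2$ oriented path: pairs within $U_1$ or within $V_1$ use the good orientation; pairs across the $U_1$--$V_1$ boundary use a hop through $z_1$ or $z_2$ (for instance $u_1 \to z_2 \to v_1$); pairs between $Z$ and $W$ are either direct bipartite edges or two-step paths whose intermediate vertex is supplied by the non-triviality property (e.g.\ $z_1 \to u_1 \to v_1$ where $u_1$ is an in-neighbor of $v_1$ in $U_1$, guaranteed by non-triviality); and $z_1 \leftrightarrow z_2$ is handled by any vertex of $U_1$ or $V_1$. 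Case (ii) follows the same template: orient $R[Z]$ as a directed triangle so that distances inside $Z$ are $\le 2$, and assign each $z_i \in Z$ the role of either $z_1$ or $z_2$ from case (iii); the same verification carries through.

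Case (i) is the main obstacle. Writing $U_2, V_2$ for the partition classes of the non-trivial good orientation of $R[Z]$, I would orient the bipartite edges at the class level as a directed $4$-cycle: $U_1 \to U_2$, $U_2 \to V_1$, $V_1 \to V_2$, $V_2 \to U_1$. To verify diameter $\le 2$ one runs through the four within-class pairs (handled by the two internal good orientations, augmented where needed by cross-paths such as $u_2 \to v_1 \to v_2$ for the $u_2$-to-$v_2$ distance, which the good orientations alone do not bound) and the twelve cross-class pairs, which split into three flavours: ``adjacent'' class pairs along the $4$-cycle are direct bipartite edges; ``diametrically opposite'' class pairs such as $u_1$-to-$v_1$ or $u_2$-to-$v_2$ are bridged by any vertex of the next class along the cycle (e.g.\ $u_1 \to u_2 \to v_1$); and ``backward'' class pairs such as $u_2$-to-$u_1$ or $u_1$-to-$v_2$ are handled by a two-step path whose intermediate vertex is chosen using the non-triviality of one of the two good orientations (e.g.\ $u_2 \to v_2' \to u_1$, where $v_2'$ is an out-neighbor of $u_2$ in $V_2$ provided by non-triviality, followed by the direct bipartite edge $v_2' \to u_1$; symmetrically $u_1 \to v_1' \to v_2$, where $v_1'$ is an out-neighbor of $u_1$ in $V_1$). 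The key mechanism throughout is that the non-triviality clauses always supply an intermediate vertex wherever the bipartite orientation alone would fall short, so once the orientation is fixed, the argument reduces to a finite case check.
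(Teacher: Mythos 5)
Your proposal takes essentially the same route as the paper: the same class-level four-cycle orientation $U_1 \to U_2 \to V_1 \to V_2 \to U_1$ in case (i), the same directed triangle on $Z$ in case (ii), and the same $W$--$Z$ assignment with one vertex of each role in case (iii) (just with the names $z_1,z_2$ swapped relative to the paper's $y_1,y_2$); the verification you spell out is what the paper leaves as ``it is easy to verify.'' The only slight imprecision is in case (ii), where ``assign each $z_i$ the role of either $z_1$ or $z_2$'' should explicitly say that both roles must appear (otherwise the $U_1\to V_1$ or $V_1\to U_1$ paths through $Z$ disappear); the rest is sound.
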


\begin{proof} 
Let $O_W$ be a non-trivial good orientation of $R[W]$ with a 
corresponding partition of $W$ into sets $U_1$ and $V_1$. We will extend it to a non-trivial good orientation of $V$.

Proof of (i): Let $O_Z$ be a non-trivial good orientation of $R[Z]$ with a 
corresponding partition of $Z$ into sets $U_2$ and $V_2$. We assign the orientation $U_1\rightarrow U_2$, $U_2\rightarrow V_1$, $V_1\rightarrow V_2$, and $V_2\rightarrow U_1$. We also include $O_W$ and $O_Z$ in the orientation. It is easy to verify that this in indeed a non-trivial orientation of diameter 2.

Proof of (ii) and (iii): Let $Z=\{y_1, \ldots, y_k\}$ ($k\in\{2,3\}$). 
If $k=3$, orient $R[Z]$ as $y_1\rightarrow y_2\rightarrow y_3\rightarrow y_1$. 
For the remaining red edges,
orient $U_1 \rightarrow y_1$ and $y_1 \rightarrow V_1$, and for $j\in[k]\setminus\{1\}$
orient $y_j \rightarrow U_1$ and 
$V_1 \rightarrow y_j$. Orient any remaining red edges arbitrarily.
It is easy to verify that this is indeed a non-trivial orientation of diameter two. 
\end{proof}

\begin{lemma} \label{la:Garner-list}
The following graphs have an orientation of diameter two:
\begin{enumerate}[{\rm (1)}]
	\item $\overline{Q\cup 7K_1}$,  where $Q\in\{K_4,D_{4,2},D_{4,1}\}$ 
	\item $\overline{D_{4,3} \cup 8K_1}$,
	 \item $\overline{Q \cup 6K_1}$ and  $\overline{Q \cup K_2 \cup 5K_1}$, where $Q\in\{D_{3,3},S_{3,3}\}$
	 \item $\overline{Q\cup aP_1\cup bP_2}$, with $a,b\geq 0$ and $a+b=5$, where $Q\in\{D_{3,2},C_5,D_{3,1},K_3\}$
	 \item\label{la:Gllast} $\overline{aP_1\cup bP_2 \cup cP_3\cup dP_4}$, with $a,b,c,d\geq 0$ and $a+b+c+d=5$.
\end{enumerate}
	 In particular by case~{\rm (\ref{la:Gllast})} {\rm Theorem~\ref{theo:main-result}} holds for $5\le n\le 7$.
\end{lemma}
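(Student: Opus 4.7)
For each graph on the list the strategy is to apply Lemma~\ref{lem:2orient} to a partition $V=W\cup Z$ with no blue edge between $W$ and $Z$ (so that each component of $B$ stays on one side), produce a non-trivial good orientation of $R[W]$ by one of Lemma~\ref{lem:Kab}, Lemma~\ref{lem:Kabsubgraph}, and Corollary~\ref{cor:paths}, and arrange for $Z$ to satisfy one of the conditions (i)--(iii) of Lemma~\ref{lem:2orient}.

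For families (1)--(4) the blue graph has many isolated vertices. I will set $Z$ equal to either three such vertices (invoking (ii)) or two of them (invoking (iii)), and let $W=V\setminus Z$; then I bipartition $W=X\sqcup Y$ along component lines, with $Y=V(Q)$ and $X$ the union of the other blue components in $W$ (the remaining isolated vertices, together with the $K_2$-component in the second sub-case of~(3) and the $P_2$-components of~(4)). Since $B[W]$ has no edge between $X$ and $Y$, the red subgraph $R[W]$ contains $K_{|X|,|Y|}$ as a spanning subgraph. The parameter $|X|$ is tuned by shifting isolated-in-$B$ vertices between $W$ and $Z$ so that $(|X|,|Y|)$ falls inside the hypothesis of Lemma~\ref{lem:Kab}; some choice $|X|\in\{4,5\}$ works for every instance of (1), (2), (3) and for every sub-case of (4) other than $(a,b)=(5,0)$ with $Q\in\{D_{3,2},D_{3,1},C_5\}$. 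In those residual cases $|X|=3$ and $|Y|=|Q|\in\{4,5\}$ is forced, and I invoke Lemma~\ref{lem:Kabsubgraph} instead, using $D_{3,1}=K_3\boxplus K_1$, $D_{3,2}\trianglelefteq K_3\boxplus K_2$ (delete one matching edge), and $C_5\trianglelefteq K_3\boxplus K_2$ via the Hamilton cycle visiting $a_3,a_1,b_1,b_2,a_2$ (this last verification is the one non-routine observation in this block).

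For family (5) I take $W=V$, so that $B[V]$ is a disjoint union of exactly five paths from $\{P_1,\dots,P_4\}$. The task reduces to grouping the five components into two sets with vertex counts $(a',b')$ satisfying $3\le a'\le b'\le 2a'$ and applying Corollary~\ref{cor:paths}. For every $(a,b,c,d)\ne(5,0,0,0)$ with $a+b+c+d=5$, setting $n=a+2b+3c+4d$, the target interval $[\lceil n/3\rceil,\lfloor n/2\rfloor]$ for $a'$ contains an achievable subset-sum of the component-size multiset; this is a short finite check and is the main technical step. The outlier $(5,0,0,0)$ corresponds to $G=K_5$, whose cyclic tournament on $\mathbb Z_5$ with arcs $i\to i+1$, $i\to i+2\pmod 5$ has diameter two. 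Finally, the ``in particular'' conclusion for $5\le n\le 7$ follows because any diameter-two orientation of a spanning subgraph $G'\subseteq G$ extends to $G$ by arbitrarily orienting the extra edges: the size bound forces $|E(\overline G)|\le n-5\le 2$, so $\overline G$ is contained in some $\overline{G'}=aP_1\cup bP_2\cup cP_3\cup dP_4$ with exactly five components, and case~(5) orients $G'$.
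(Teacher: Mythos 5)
Your overall strategy (partition $V = W \cup Z$, orient $R[W]$ via Lemma~\ref{lem:Kab}, Lemma~\ref{lem:Kabsubgraph} or Corollary~\ref{cor:paths}, then combine via Lemma~\ref{lem:2orient}) is the same as the paper's, but there is a decisive gap in your treatment of family (5). You take $W=V$ and $Z=\emptyset$ and claim that a non-trivial good orientation of $R[V]$ already has diameter two. It does not: the definition of a (non-trivial) good orientation only guarantees $d(x,y)\le 2$ when $x,y$ lie in the \emph{same} partition class $U_1$ or $V_1$; condition (ii) gives each vertex an in- and out-neighbour in the opposite class, but that only bounds cross-class distances by $3$. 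In Lemma~\ref{lem:Kab}'s orientation of $K_{a,b}$, a vertex $x_j\in U_1$ and a vertex $y_i\in V_1$ with the arc $\overrightarrow{y_ix_j}$ can have $d(x_j,y_i)=3$, because $V_1$ is independent in $K_{a,b}$. Lemma~\ref{lem:2orient} always requires a nonempty $Z$ so the arcs $U_1\to U_2\to V_1\to V_2\to U_1$ repair the cross-class distances. Because of this, your subset-sum argument does not directly produce a diameter-two orientation; the paper instead sets aside two equal-length paths as $Z$ (quadruple $(P_i,P_i,P_j,P_k\cup P_\ell)$) for $n\ge 10$, handles $8\le n\le 9$ by explicit partitions with nonempty $Z$, and treats $n\le 7$ and the genuinely recalcitrant case $B=P_4\cup 4P_1$ by computer search. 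That last case is not an outlier you can dismiss: even with a nonempty $Z$ of $2$ or $3$ isolated vertices, $W$ becomes $2P_1\cup P_4$ or $P_1\cup P_4$, which cannot be split along components to meet $3\le a'\le b'\le 2a'$, so Corollary~\ref{cor:paths} never applies.

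There are also smaller inaccuracies in family (4). Your claim that some $|X|\in\{4,5\}$ satisfies Lemma~\ref{lem:Kab} in all sub-cases except $(a,b)=(5,0)$ with $Q\in\{D_{3,2},D_{3,1},C_5\}$ is false when $Q=K_3$: then $|Y|=3$ and $b\le\binom{3}{1}=3$ forces $|X|=3$, not $4$ or $5$; moreover for $(a,b)=(2,3)$ or $(3,2)$ with $Q=K_3$ the parameter $|X|$ that actually arises needs Lemma~\ref{lem:Kabsubgraph}, not Lemma~\ref{lem:Kab}. More substantively, your recipe of making $Z$ out of two or three vertices that are isolated in $B$ is unavailable whenever $a\le 1$ (e.g. $(a,b)=(0,5)$ has no isolated vertex at all, and $(1,4)$ only one); there you must take $Z$ to be an entire $P_2$ component and re-balance the sides, which also changes which of the two lemmas applies. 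None of these are fatal to the underlying strategy -- the paper repairs exactly these issues -- but as written your case analysis has genuine holes.
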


\begin{proof}
	We either directly give the orientation (for small graphs in case (\ref{la:Gllast})) or 
	find a partition of $V$ into two disjoint sets $W$ and $Z$  for which the conditions of Lemma \ref{lem:2orient} hold. 
	We will do the latter by exhibiting a quadruple $(U_1,V_1,U_2,V_2)$ of subgraphs of $B$  whose vertices partition
	$V$. This signifies that $Z=V(U_1)\cup V(V_1)$, $B[W]=U_2\cup V_2$, all edges between $Z$ and $W$ are red, $R[W]$ has a non-trivial good
	orientation with partition classes $U_2$ and $V_2$, and either $|Z|=2$ (i.e. both $U_1$ and $V_1$ are the singleton $K_1$ and
	$B[Z])\in\{K_2,2K_1\}$), or $|Z|=3$ and the vertices in $Z$ are isolated in $B$, or $R[Z]$ has a 
	non-trivial good orientation with partition classes $U_1$ and $V_1$ (and consequently $B[Z]=U_1\cup V_1)$. 
	
	The proofs of each case in the theorem follow. 
	\begin{enumerate}[(1)]
		\item $B=Q\cup 7K_1$, where $Q\in\{K_4,D_{4,2},D_{4,1}\}$. As $4\le n(Q)\le 6$,
		the quadruple $(K_1,K_1,Q,5K_1)$ gives an orientation of diameter two by Lemmata~\ref{lem:Kab} and~\ref{lem:2orient}.
		
		\item $B=D_{4,3}\cup 8K_1$. We use quadruple $(K_1,K_1,6K_1,D_{4,3})$. Since $6K_1$ and $D_{4,3}$ form a partition of $B$ into 
		two graphs $U_2$ and $V_2$, with $n(U_2)=6$ and $n(V_2)=7$,  Lemma~\ref{lem:Kab} 
		gives that $W$ has a non-trivial good orientation. 
		Since $|Z|=2$, Lemma \ref{lem:2orient} gives a diameter two orientation of $R$.
		
		\item $B\in\{Q \cup 6K_1,Q \cup K_2 \cup 5K_1\}$ where $Q\in\{D_{3,3},S_{3,3}\}$.
		 In both cases quadruple $(K_1,K_1,4K_1,Q)$  gives 
		the required orientation by Lemmata~\ref{lem:Kab} and ~\ref{lem:2orient}.

		\item  $B=Q \cup aP_1\cup bP_2$, with $a,b\geq 0$ and $a+b=5$, where\\ $Q\in\{D_{3,2},C_5,D_{3,1},K_3\}$. Then
		$Q=K_3$ or $n(Q)\in\{4,5\}$. As $\max(a,b)\ge 3$, there are two paths of 
		the same size. Choose a pair of such paths of minimum order $i$ (so $i\in\{1,2\}$), and let $H$ be the union of the remaining three paths. 
		Clearly $3\leq n(H)\leq 6$.\\
		Consider the quadruple $(P_i,P_i,H,Q)$.\\
		If $n(Q)=n(H)=3$ or $n(Q)\ne 3\ne n(H)$, then by Lemmata~\ref{lem:Kab} and ~\ref{lem:2orient} we have the required orientation.\\ 
		If $n(H)=3\ne n(Q)$, notice that $D_{3,2}\trianglelefteq K_3\boxplus K_2$, $C_5\trianglelefteq K_3\boxplus K_2$ and $D_{3,1}= K_3\boxplus K_1$
		and use Lemmata~\ref{lem:Kabsubgraph} and~\ref{lem:2orient} to find an orientation of diameter two. \\
		If $n(H)\ne 3=n(Q)$, then the fact that $H$ is the disjoint union of paths gives that $H \trianglelefteq K_3\boxplus K_{n(H)-3}$. 
		Lemmata~\ref{lem:Kabsubgraph} and~\ref{lem:2orient} give the required orientation.

		\item $B=aP_1\cup bP_2 \cup cP_3\cup dP_4$, with $a,b,c,d\geq 0$ and $a+b+c+d=5$. \\
		All cases where $n(G)<8$ (i.e. when $a+2b+3c+4d\le 7$) and the case where $a=4$, $b=0$, $c=0$, and $d=1$ were done by computer search. See Figure 1 for the orientations of these graphs.

		\begin{figure}\label{fig:orientations}
  			\caption{Orientations of graphs where either $n(G)<8$ or $a=4$, $b=0$, $c=0$, and $d=1$.}
 			\centering
 			\vspace{5mm}
			\input{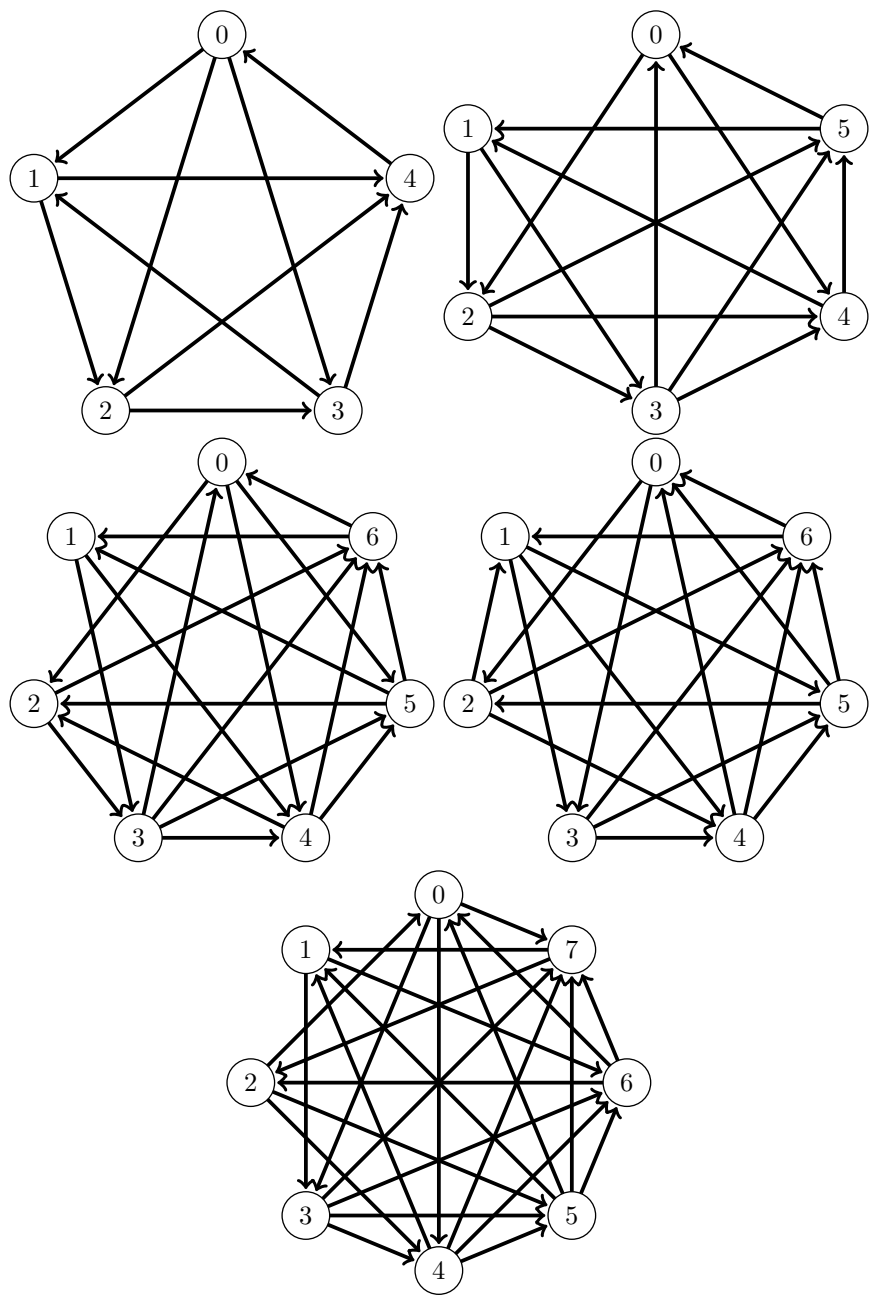}		
		\end{figure}

		For $8\leq n(G)\leq 9$ and we are not in the case $B=P_4\cup 4P_1$, we will consider partitions which use Corollary \ref{cor:paths} and Lemma \ref{lem:2orient}. If $B=P_3\cup P_2\cup 3P_1$, consider the partition $(K_1,K_1,3P_1,P_3)$. If $B=2P_1\cup 3P_2$, consider the partition $(K_1,K_1,P_2\cup P_1,P_2\cup P_1)$. If $B=P_4\cup P_2\cup 3P_1$, consider the partition $(K_1,K_1,3P_1,P_4)$. If $B=2P_3\cup 3P_1$, consider the partition $(P_1,P_1,P_3,P_3\cup P_1)$. If $B=P_3\cup 2P_2\cup 2P_1$, consider the partition $(P_1,P_1,P_3,2P_2)$. If $B=4P_2\cup P_1$, consider the partition $(K_1,K_1,2P_2,2P_2\cup P_1)$. This considers all cases where $n(G)\leq 9$.

		Let $n(G)\geq 10$. As $\max(a,b,c,d)\ge 2$, we again have two paths of the same length. Let $H$ be the union of two paths $P_i$ of the same length where $i$ is chosen
		to be minimum possible, and the remaining three paths be $P_j,P_k,P_\ell$ where without loss of generality $k\le \ell\le j$. We have
		$2i+j+k+\ell=n(G)\ge 10$, so (since $j\ge k\geq\ell$) $\frac{10-2i}{3}\leq j\leq 4$ and $k+\ell\le 2j$. We have two cases.
		\\[1mm]
		{\sc Case 1:} $i=1$\\ 		
		 As $\frac{8}{3}\leq j\leq 4$, we have $j\in\{3,4\}$ and $j\leq 4\leq 10-j-2\leq k+\ell\leq 2j$. Take the quadruple $(P_1,P_1,P_j,P_k\cup P_\ell)$; 
		Lemmata~\ref{lem:Kabsubgraph} and~\ref{lem:2orient} give the required orientation.\\[1mm]
		{\sc Case 2:} $i\geq2$\\ 		
		By the definition of $i$ we must have $\max(k,\ell)>1$, so $k+\ell\ge 3$. 		
		If $j=2$, this gives $i=j=k=\ell=2$ and $G=K_{10}-M$, which has the required orientation by using Corollary \ref{cor:paths} and Lemma \ref{lem:2orient} with the partition $(K_1,K_1,2P_2,2P_2)$, so assume $j\ge 3$. 
		Now either $3\leq k+\ell\le j\leq 4$ or $3\leq j\leq k+\ell\le 2j$ and in both cases the quadruple 
		$(P_i,P_i,P_j,P_k\cup P_\ell)$ with Lemmata~\ref{lem:Kabsubgraph} and~\ref{lem:2orient} give the required orientation.
	\end{enumerate}	
	\end{proof}

\begin{definition}
Let $W\subseteq V$ such that $B[W]$ is the union of one or more components 
of $B$. We say that $W$ is a {\em reducible unit} if  $R[W]$ has a good orientation. 
We say that $W$ is a {\em reduction} if $R[W]$ has a non-trivial good orientation 
and ${\rm ex}(B[W]) \geq -1$.  
\end{definition}

\section{Properties of $B$}
\label{section:some properties of B}

From now on we assume that $G$ is a minimal counterexample, that
is, $G$ is a graph on $n$ vertices, $n\geq 5$, and at least 
$\binom{n}{2} -(n-5)$ edges that has no orientation of diameter two, 
and among those graphs let $G$ be a graph of minimum order and
of minimum size. Clearly, if $G$ has  $n$ vertices, then $G$ 
has exactly $\binom{n}{2} - (n-5)$ edges. Hence the corresponding
graph $B$ has order $n$ and size $n-5$.  
Moreover, $n\ge 8$ by Lemma \ref{la:Garner-list}.

In this section we show that a minimal counterexample cannot have a reduction. 
We also show that no component of $B$ contains three independent vertices,
and that no component has two independent vertices that have at least two common
neighbors.

\begin{lemma}\label{lem:no-reduction} 
Let $G$ be a minimal counterexample. Then $B$ has no reduction. 
\end{lemma}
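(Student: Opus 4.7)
The plan is to suppose $W \subseteq V$ is a reduction and then produce an orientation of $R=G$ of diameter two, contradicting the hypothesis that $G$ is a counterexample. Write $Z = V \setminus W$, and let $O_W$ be the non-trivial good orientation of $R[W]$ with partition classes $U_1$ and $V_1$. Non-triviality forces each class to contain at least two vertices (a single vertex of $V_1$ cannot serve as both in-neighbor and out-neighbor of the same $u\in U_1$), so $|W|\ge 4$. Since $B[W]$ is a union of components of $B$, there are no blue edges between $W$ and $Z$, and combining this with $m(B)=n-5$ and ${\rm ex}(B[W])\ge -1$ gives
\[
m(B[Z]) \;\le\; (n-5) - (|W|-1) \;=\; |Z|-4,
\]
so in particular $|Z|\ge 4$.

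Next, I would form the auxiliary graph $G' := G[Z] + w^*$, where $w^*$ is a new vertex adjacent to every vertex of $Z$. Then $|V(G')|=|Z|+1$ lies in $\{5,\dots,n-1\}$, and a direct count using $m(B[Z])\le|Z|-4$ yields
\[
m(G') \;=\; \binom{|Z|}{2} - m(B[Z]) + |Z| \;\ge\; \binom{|V(G')|}{2} - |V(G')| + 5,
\]
so $G'$ satisfies the hypothesis of Theorem~\ref{theo:main-result}. Because $G$ is a counterexample of minimum order and $|V(G')|<n$, the graph $G'$ is not itself a counterexample, hence admits an orientation $O'$ of diameter two. Partition $Z$ into $Z_{\rm out}=\{z\in Z : w^*\to z \text{ in } O'\}$ and $Z_{\rm in}=\{z\in Z : z\to w^* \text{ in } O'\}$; both sets are non-empty, since otherwise $w^*$ would fail to reach, or be reached from, some vertex of $Z$ within two steps in $O'$.

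I would then combine three pieces into an orientation $O$ of $R$: take $O_W$ on $R[W]$, take $O'$ restricted to $Z$ on $R[Z]$, and orient the cross-edges (all of which are red by the component assumption) as
\[
U_1 \rightarrow Z_{\rm out}, \quad Z_{\rm out}\rightarrow V_1, \quad V_1\rightarrow Z_{\rm in}, \quad Z_{\rm in}\rightarrow U_1.
\]
A routine case check, using the good property of $O_W$ within each of $U_1$ and $V_1$, its non-triviality between them, and the diameter-two property of $O'$, shows that every ordered pair of vertices has a directed path of length at most two in $O$. The key observation is that any length-two $O'$-path through $w^*$ necessarily goes from some $z\in Z_{\rm in}$ to some $z'\in Z_{\rm out}$ and can be simulated in $O$ by $z\to u\to z'$ for any $u\in U_1$; analogously, the length-two paths between $U_1$ and $V_1$ required in $O$ are realized through the non-empty sets $Z_{\rm out}$ and $Z_{\rm in}$.

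The principal obstacle is selecting the auxiliary graph. The naive candidate $G-W$ has $\binom{|Z|}{2}-|Z|+4$ edges in the tight case ${\rm ex}(B[W])=-1$, one short of the Theorem~\ref{theo:main-result} threshold; adjoining the universal vertex $w^*$ contributes exactly the $|Z|$ extra edges needed to meet the size condition while keeping the order strictly below $n$, and $w^*$'s dual role in $O'$ is then reimplemented in $G$ via the non-trivial good structure of $O_W$ together with the prescribed cross-edge orientation.
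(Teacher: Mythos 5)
Your proof is correct, and it follows the same overall strategy as the paper's: replace the reduction $W$ by a small placeholder to obtain a strictly smaller graph that still meets the size threshold, invoke minimality of $G$ to obtain a diameter-two orientation of the smaller graph, and lift that orientation back to $R$. The difference is the choice of placeholder. The paper deletes $W$ and inserts \emph{two} new vertices $u_1,v_1$ joined by a blue edge, so $u_1$ and $v_1$ serve as proxies for the partition classes $U_1$ and $V_1$; the orientation of each pair $xu_1$, $xv_1$ in the smaller orientation then independently dictates how all $x$-to-$U_1$ and $x$-to-$V_1$ cross-edges are oriented, and every length-two path in the auxiliary orientation transfers verbatim. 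You instead collapse $W$ to a \emph{single} new vertex $w^*$ isolated in $B$, which is more economical (and makes the size calculation $m(B[Z])\le|Z|-4$, hence $|Z|\ge 4$, fall out cleanly), but you then have to reconstruct the two-class structure yourself via the $Z_{\rm in}$/$Z_{\rm out}$ split, and the cross-edge orientations are rigidly tied to that split rather than being chosen per vertex of $Z$. This is why your verification explicitly leans on the non-triviality of $O_W$ (to route $u\to v\to z$ for $z\in Z_{\rm in}$, $z\to v\to u$ for $z\in Z_{\rm out}$, etc.), whereas the paper's two-vertex surrogate only needs $U_1,V_1\ne\emptyset$. Both approaches are sound; yours trades one fewer auxiliary vertex for a slightly longer case check and one extra observation (non-emptiness of $Z_{\rm in}$ and $Z_{\rm out}$), which you handle correctly.
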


\begin{proof}
Suppose to the contrary that $B$ has a reduction $W$. 
Then $|W| >2$ and, by $m(B[W]) \geq |W|-1$, also $W \neq V$. 
Let $O_W$ be a non-trivial good orientation of $R[W]$ and 
let $U_1$ and $V_1$ be the partition classes of $O_W$. 
Create $B^{\star}$ from $B$ by removing the vertices of $W$ and adding two new vertines
$u_1,v_1$ with a blue edge $u_1v_1$ connecting them.
As $B[W]$ is a union
of components of $B$, $B$ contains no edges joining vertices in
$W$ to vertices in $V - W$. 
Then $n(B^{\star})=n+2-|W|<n$ and since $m(B[W]) \geq |W|-1$, 
\[ 1\leq m(B^{\star}) = (n-5)-m(B[W])+1 \leq   n-3 -|W|  < n(B^{\star})-5. \] 
In particular, $5<n(B^{\star})$.
Since $B$ was a minimal a counterexample, the red graph
$R^{\star}$ corresponding to $B^{\star}$ has an orientation $O^{\star}$ of diameter $2$. 

We now make use of $O_W$ and $O^{\star}$ to obtain an orientation $O_R$ of diameter $2$
of $R$. Let $x,y \in V$. If  $x, y\in W$ then orient $xy$ as
in $O_W$. If $x,y \in V-W$ then orient $xy$ as in $R^{\star}$. 
The remaining edges, joining a vertex in $x\in V-W$ to a vertex in $y \in W$ 
are oriented as follows. 
If $xu_1$ has received the orientation $\overrightarrow{xu_1}$
in $O^{\star}$ then we orient $x \rightarrow U_1$,  
and if $xu_1$ has received the orientation $\overrightarrow{u_1 x}$
in $O^{\star}$ then we orient $U_1 \rightarrow x$.  
Similarly, if $xv_1$ has received the orientation $\overrightarrow{xv_1}$
in $O^{\star}$ then we orient $x \rightarrow V_1$,  
and if $xv_1$ has received the orientation $\overrightarrow{v_1 x}$
in $O^{\star}$ then we orient $V_1 \rightarrow x$.  

If $x,y \in V$, then  either both vertices are in the same set $U_1$ (or $V_1$),
in which case there is a path of length at most two in $O_W$, or they are in 
different sets, for example $x\in U_1$ and $y\in V_1$, in which case the 
$(u_1,v_1)$-path in $O^{\star}$ gives rise to an $(x,y)$-path in $O_R$. 
\end{proof}

\begin{lemma} \label{la:important}
Let $G$ be a minimal counterexample. If  $X$ is an independent set of order
$3$ in $B$, and $N_i$ is the set of vertices in $v \in V-X$ having
exactly $i$ neighbors (in $B$) in $X$, then
\begin{equation} \label{eq:important}
|N_2| \leq 1 \quad \textrm{and} \quad N_3=\emptyset.
\end{equation}
\end{lemma}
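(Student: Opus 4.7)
I would proceed by contradiction: suppose either $|N_2|\ge 2$ or $N_3\ne\emptyset$. The strategy is to exhibit a reduction of $B$ as defined in Section~\ref{section:some properties of B}, which would contradict Lemma~\ref{lem:no-reduction}.

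Let $W$ denote the union of the blue components of $B$ meeting $X$. Since any vertex of $N_2\cup N_3$ has a blue edge into $X$, one has $X\cup N_2\cup N_3\subseteq W$. The blue edges incident to $X$ all lie in $B[W]$ and number exactly $|N_1|+2|N_2|+3|N_3|$; combined with the connectivity of each component of $W$ this gives a lower bound on $m(B[W])$. In the regimes allowed by the hypothesis, a careful count shows $m(B[W])\ge n(B[W])-1$, so ${\rm ex}(B[W])\ge -1$. For the non-trivial good orientation of $R[W]$, I would use that $X$ is a red triangle (as $X$ is blue-independent) and that most vertices of $W-X$ have at least one red edge to $X$, producing a spanning (near-)complete bipartite subgraph in $R[W]$ to which Lemma~\ref{lem:Kab} or Lemma~\ref{lem:Kabsubgraph} applies. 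The resulting non-trivial good orientation and the excess bound together exhibit $W$ as a reduction.

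The main obstacle is the degenerate regime where $W$ is so small that $R[W]$ cannot admit a non-trivial good orientation. The canonical instance is $N_1=N_2=\emptyset$ and $|N_3|=1$, say $N_3=\{v\}$: then $W=\{v\}\cup X$ with $B[W]=K_{1,3}$, so $R[W]=K_3\cup K_1$ with $v$ isolated in $R[W]$, and no non-trivial good orientation exists; moreover, no amount of adjoining isolated-vertex or tree components can restore ${\rm ex}(B[W])\ge -1$. I would resolve this either by enlarging $W$ with another blue component of non-negative excess (which reintroduces red edges at $v$ while preserving the excess bound), or, in the case where no such component exists and $B$ is forced to be $K_{1,3}$ together with a blue forest on the remaining $n-4$ vertices, by recognising $R=K_n-K_{1,3}-(\text{forest})$ as sufficiently dense to admit a diameter-two orientation directly, using the $n-4$ vertices outside $W$ as length-two ``detours'' between $v$ and each $x_i$. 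This contradicts $G$ being a counterexample, since $R$ is not then without such an orientation.

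The $|N_2|\ge 2$ case is dealt with by a parallel analysis, splitting on whether the two $N_2$-vertices have blue-neighborhoods in $X$ that coincide (yielding a blue $K_{2,2}$ inside $W$), overlap in exactly one vertex, or (impossible in this case) are disjoint; in each sub-case the same reduction-or-direct-construction dichotomy applies, and the reduction side becomes easier because $|N_2|\ge 2$ contributes at least four blue edges incident to $X$, giving more slack in the excess bound.
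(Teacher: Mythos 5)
Your approach is genuinely different from the paper's, but as written it has gaps that I do not believe can be closed without essentially abandoning the strategy. The paper does not try to realize $W$ as a reduction at all: it contracts the three vertices of $X$ to a single new vertex $x$ in $B$, producing a graph $B^{\star}$ on $n-2$ vertices with $m(B^{\star})=m(B)-|N_2|-2|N_3|\leq m(B)-2=n(B^{\star})-5$. Minimality then gives a diameter-two orientation $O^{\star}$ of the complementary red graph $R^{\star}$, which is lifted back to $R$ by orienting the red triangle on $X$ as a directed $3$-cycle and orienting every red edge $ux_i$ as the edge $ux$ is oriented in $O^{\star}$. This works uniformly, with no structural assumptions on the component containing $X$.

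The concrete problems with your reduction-based route are the following. First, the set $W$ (the union of blue components meeting $X$) is not under any size control: the component of $B$ containing $X$ could be large, and then the bipartite bounds in Lemma~\ref{lem:Kab} ($a\le b\le\binom{a}{\lfloor a/2\rfloor}$ with $a=3$ forces $b=3$) and Lemma~\ref{lem:Kabsubgraph} ($b\le 2a=6$) cannot be met with $X$ as one partite class. Second, a vertex in $N_3$ has no red edge to $X$, and a vertex in $N_2$ has only one; so $R[W]$ need not contain any spanning $K_{3,|W|-3}$ with $X$ on one side, and the hypotheses of Lemma~\ref{lem:Kabsubgraph} are not available. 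Third, the excess estimate ${\rm ex}(B[W])\ge -1$ is not actually justified by the count you sketch: when $|N_2|\ge 2$ and both $N_2$-vertices have the same two blue neighbours in $X$, the third vertex of $X$ may sit in a separate tree component, and $B[W]$ can be a union of two trees with excess $-2$. Fourth, and most importantly, the ``degenerate'' branch is resolved only by appeal to graphs being ``sufficiently dense to admit a diameter-two orientation directly,'' which is precisely what the whole paper is trying to establish; you also cannot invoke the later structural Lemmata~\ref{lem:alpha2} or~\ref{la:no-two-indep-vertices-share-two-neighbors} to rule out blue $K_{1,3}$'s, since those lemmas are proved using the very statement you are trying to prove. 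The contraction argument avoids all of these difficulties at once, and I would encourage you to look for an operation on $B$ (rather than on $W$) that strictly decreases $n$ while preserving $m(B)\le n-5$.
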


\begin{proof} 
Suppose that $X=\{x_1,x_2,x_3\}$ is an independent set $B$
such that (\ref{eq:important}) does not hold. 
Create a new blue graph $B^{\star}$ by identifying the vertices of $X$ to a new vertex $x$ and removing multiple edges. Then $n(B^{\star})=n-2\ge 5$ and 
\[  m(B^{\star})  =  m(B)-|N_2| - 2|N_3| \leq m(B) - 2 = n-7 = n(B^{\star}) - 5. \]
Therefore, since $G$ is a minimal counterexample, the red graph 
$R^{\star}$ corresponding to $B^{\star}$ has an orientation $O^{\star}$ of diameter $2$. 

We will now orient $R$. 
Orient every edge $uv$ with $u,v \notin \{x_1, x_2, x_3\}$ as in $O^{\star}$. 
Orient $R[X]$ as $\overrightarrow{x_1x_2}$, $\overrightarrow{x_2x_3}$ and $\overrightarrow{x_3x_1}$,.
If an edge $ux$ is present in $R^{\star}$, then all edges $ux_i$, $i=1,2,3$ are present 
in $R$, and depending on whether $ux$ is oriented as $\overrightarrow{ux}$ 
or as $\overrightarrow{xu}$ in $O^{\star}$, we orient them 
$u \rightarrow \{x_1, x_2, x_3\}$ or $\{x_1, x_2, x_3\} \rightarrow u$. 
Orient any remaining edges in $R$ arbitrarily. to obtain the orientation $O_R$

Let $u,v\in V(G)$. 
If $u,v \in \{x_1, x_2, x_3\}$, then clearly there exists a $(u,v)$-path of length
at most two in $O_R$. 
If $u \in X$ and $v \in V-X$ or vice versa then the 
$(x,v)$-path of length at most two in $O^{\star}$ gives rise to a $(u,v)$-path of the same
length in $O_R$. 
If $u, v \in V-X$ then the $(u,v)$-path of length at most two in $O^{\star}$
gives rise to a $(u,v)$-path of the same length in $O_R$.  
This shows that $O_R$ is an orientation of $R$ of diameter $2$, a contradiction to 
$G$ being a counterexample. 
\end{proof}

\begin{lemma}\label{lem:alpha2} 
Let $G$ be a minimal counterexample.  Then no component of $B$ has 
three independent vertices. 
\end{lemma}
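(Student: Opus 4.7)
The plan is to suppose for contradiction that a component $C$ of $B$ contains three pairwise non-adjacent vertices $x_1, x_2, x_3$ and to derive a contradiction with Lemma~\ref{la:important}. Applied to $\{x_1,x_2,x_3\}$, Lemma~\ref{la:important} already yields $N_3=\emptyset$ and $|N_2|\le 1$, so the goal is to exhibit a second pairwise non-adjacent triple in $B$ whose associated $N_2$ or $N_3$ count violates Lemma~\ref{la:important}.

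The key device is a minimum Steiner tree $T$ for $\{x_1,x_2,x_3\}$ inside $B[C]$. Minimality of $T$ forbids certain chord edges of $B[C]$ within $V(T)$: if $u,v\in V(T)$ lie at $T$-distance at least two and every interior vertex on the $T$-path from $u$ to $v$ is a Steiner (non-$X$) vertex of $T$-degree two, then $uv\notin E(B[C])$, for otherwise the edge-swap $T-e+uv$ followed by pruning the (now-leaf) Steiner vertices would produce a strictly smaller Steiner tree. Because the $x_i$ are pairwise non-adjacent in $B$, $T$ contains at least one Steiner vertex; and since the leaves of $T$ lie in $\{x_1,x_2,x_3\}$, $T$ is either a subdivided star with a central Steiner branching vertex $b$ of $T$-degree three and three branches ending at $x_1,x_2,x_3$ (Case~I), or a path with one of the $x_i$ (say $x_2$) in its interior (Case~II).

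In Case~II, label the path as $x_1=v_0,v_1,\ldots,v_{r+s}=x_3$ with $x_2=v_r$ and $r,s\ge 2$. The triple $\{x_1,v_2,v_4\}$ is independent in $B$ by the chord restriction, and $v_1$ is adjacent in $B$ to both $x_1$ and $v_2$ while $v_3$ is adjacent in $B$ to both $v_2$ and $v_4$, so $\{v_1,v_3\}\subseteq N_2$ with respect to $\{x_1,v_2,v_4\}$, yielding $|N_2|\ge 2$ and the desired contradiction. In the exceptional sub-case where $r=3$ and the chord $v_2v_4$ (through the terminal $v_3=x_2$) lies in $E(B)$, I would use $\{v_1,x_2,v_5\}$ instead, verify its independence by iterating the pruning argument, and note that $v_2$ and $v_4$ both lie in $N_2$ with respect to this modified triple. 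In Case~I, take $p_i$ to be the $T$-neighbour of $b$ on the branch to $x_i$ (so $p_i=x_i$ if the branch has length one): if the triple $\{p_1,p_2,p_3\}$ is independent in $B$, then $b\in N_3$ with respect to this triple, contradicting Lemma~\ref{la:important}; otherwise I would analyse the sub-cases according to which chords $p_ip_j\in E(B)$ are present (these are not forbidden by the chord restriction since $b$ has $T$-degree three), producing in each sub-case an alternative triple such as $\{p_1,p_3,x_2\}$ or $\{x_1,x_2,p_3\}$ yielding either two elements of $N_2$ or one element of $N_3$.

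The main obstacle is the bookkeeping required in Case~I, where chords among the $p_i$ are not excluded by the minimality of $T$, together with the Case~II sub-case $r=3$ in which the chord over the terminal $x_2$ cannot be excluded by minimality either. In each such configuration, however, a well-chosen auxiliary triple always produces either $|N_2|\ge 2$ or $N_3\ne\emptyset$, contradicting Lemma~\ref{la:important} and completing the proof.
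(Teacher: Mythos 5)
Your approach differs genuinely from the paper's: you use a minimum Steiner tree for the independent triple inside the component and reason about forbidden chords, whereas the paper re-selects the triple so that two of the pairs are at distance exactly two in $B$ and then directly exhibits the two common neighbours (equal, giving $N_3\neq\emptyset$, or distinct, giving $|N_2|\geq 2$). The re-selection argument is considerably shorter and avoids any branching on the shape of the Steiner tree. That said, your structural dichotomy (Case~I subdivided star vs.\ Case~II path with one terminal interior) is correct, and the Case~II path essentially coincides with the configuration the paper engineers directly.

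There are, however, two genuine gaps. First, your stated chord restriction is too strong: you require \emph{every} interior vertex of the $T$-path to be Steiner of degree two, but the edge-swap argument only needs \emph{one} such interior vertex (remove an edge of the $T$-path incident to it, add $uv$, and prune). As stated, your restriction fails to forbid the chord $x_1v_4$ when $r=2$ (there $v_2=x_2\in X$ is interior), so the independence of $\{x_1,v_2,v_4\}$ is not established in that sub-case; you flag $r=3$ as exceptional but not $r=2$. The weaker, correct form of the restriction repairs this, but you would need to state and use it. Second, and more seriously, Case~I is not actually proved. Once some $p_ip_j\in E(B)$, you assert that ``a well-chosen auxiliary triple always produces either $|N_2|\ge 2$ or $N_3\ne\emptyset$,'' and offer candidate triples such as $\{p_1,p_3,x_2\}$, but the analysis depends delicately on the branch lengths $\ell_i$ and on which of the chords $p_ip_j$ are present. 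For example, with $\ell_1=1$ (so $p_1=x_1$), $p_1p_2\in E(B)$, and $\ell_2\geq 3$, the triple $\{p_1,p_3,x_2\}$ yields only $b\in N_2$ and $p_2\in N_1$, so no contradiction follows; one must move further up the branch (e.g.\ replace $x_2$ by the vertex at $T$-distance two from $b$). These sub-cases can be closed, but as written the argument leaves the burden to the reader, and the configuration that actually delivers $|N_2|\geq 2$ varies with the branch lengths in a way you do not control. Until that case analysis is carried out, the proof is incomplete.
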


\begin{proof} 
Suppose to the contrary that $B$ has a component which contains three
independent vertices $x_1$, $x_2$ and $x_3$. We may assume that 
\begin{equation}    \label{eq:no-indep-set-1}
d_B(x_1,\{x_2, x_3\})=2.
\end{equation}  
Indeed, if $d_B(x_1, \{x_2, x_3\}) \geq 3$ then let $x_1^{\prime}$ be a 
vertex on a shortest path in $B$ from $x_1$ to $\{x_2,x_3\}$ that 
is at distance two from $\{x_2, x_3\}$. The new set $\{x_1^{\prime}, x_2, x_3\}$
is independent and satisfies \eqref{eq:no-indep-set-1}. \\
By \eqref{eq:no-indep-set-1} we may assume, possibly after renaming 
vertices, that $d_B(x_1,x_2)=2$. 
A similar argument as above now yields that we can choose $x_3$
such that also 
\begin{equation*}   
d_B(x_3,\{x_1, x_2\})=2.
\end{equation*}  
Hence we can choose $\{x_1, x_2, x_3\}$ such that it contains
at least two pairs of vertices at distance two in $B$. Hence,
possibly after renaming the vertices, we have 
\begin{equation} \label{eq:no-indep-set-3}
d_B(x_1, x_2) = d_B(x_2, x_3) =2. 
\end{equation}
Now \eqref{eq:no-indep-set-3} implies that there exists a common
neighbor $y_{12}$ of $x_1$ and $x_2$, and a common neighbor
$y_{23}$ of  $x_2$ and $x_3$ in $B$. If $y_{12} = y_{23}$, then
the set $N_3$ of vertices with exactly three neighbors in $\{x_1, x_2, x_3\}$ 
contains $y_{12}$ and is thus not empty, a contradiction to 
Lemma \ref{la:important}. If $y_{12} \neq y_{23}$, then
the
set $N_2$ of vertices with exactly two neighbors in $\{x_1, x_2, x_3\}$
contains $y_{12}$ and  $y_{23}$, again a contradiction to
Lemma \ref{la:important}. 
\end{proof}

\begin{lemma} \label{la:no-two-indep-vertices-share-two-neighbors}
Let $G$ be a minimal counterexample. If $x_1,x_2$ are independent vertices in $B$, then
$x_1$ and $x_2$ have at most one common blue neighbor. 
\end{lemma}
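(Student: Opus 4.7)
Suppose for contradiction that there exist non-adjacent vertices $x_1,x_2$ in $B$ sharing at least two common blue neighbors $y_1,y_2$. The plan is to imitate the identification-reduction used in Lemma~\ref{la:important}, but this time contracting the pair $\{x_1,x_2\}$ into a single vertex $x$, and lift a diameter-two orientation of the smaller red graph back up to $R$.

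More precisely, I would define $B^\star$ by identifying $x_1$ and $x_2$ to a new vertex $x$ and deleting the resulting multiple edges. Since $x_1x_2\notin E(B)$, no loop is created, and since $|N_B(x_1)\cap N_B(x_2)|\geq 2$, at least two duplicate edges are removed, giving
\[ n(B^\star)=n-1\geq 7,\qquad m(B^\star)\leq m(B)-2=n-7<n^\star-5.\]
Hence the red graph $R^\star$ complementary to $B^\star$ has more than $\binom{n^\star}{2}-(n^\star-5)$ edges on $n^\star<n$ vertices, so by the minimality of $G$ it admits an orientation $O^\star$ of diameter two.

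Next I would lift $O^\star$ to an orientation $O_R$ of $R$. All red edges lying inside $V-\{x_1,x_2\}$ inherit their orientation from $O^\star$. The edge $x_1x_2$, which is red by assumption, is oriented arbitrarily (say $\overrightarrow{x_1x_2}$). For $u\in V-\{x_1,x_2\}$ which is red-adjacent to both $x_1$ and $x_2$ in $R$, the edge $ux$ lies in $R^\star$, and I copy its $O^\star$-orientation to both $ux_1$ and $ux_2$: if $\overrightarrow{ux}$ in $O^\star$ then $u\rightarrow\{x_1,x_2\}$, otherwise $\{x_1,x_2\}\rightarrow u$. Any remaining red edges incident with $x_1$ or $x_2$ (those where $u$ is blue-adjacent to exactly one of $x_1,x_2$) are oriented arbitrarily.

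The main thing to verify — and the only delicate point — is that $O_R$ has diameter two. The key observation is that whenever $O^\star$ uses a length-two directed path $u\to w\to x$ or $x\to w\to u$, the middle vertex $w$ satisfies $wx\in E(R^\star)$, which forces $w$ to be red-adjacent to \emph{both} $x_1$ and $x_2$; hence by construction the $O_R$-edges $wx_1$ and $wx_2$ inherit the same direction and produce the required path to or from whichever of $x_1,x_2$ we need. With this observation, the three cases (pairs inside $V-\{x_1,x_2\}$, mixed pairs with one endpoint in $\{x_1,x_2\}$, and the pair $(x_1,x_2)$) are all handled routinely from $O^\star$, and the additional direct edge $\overrightarrow{x_1x_2}$ takes care of the last pair. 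This produces an orientation of $R=G$ of diameter two, contradicting the assumption that $G$ is a counterexample.
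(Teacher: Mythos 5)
Your contraction-and-lift strategy closely imitates the paper's proof of Lemma~\ref{la:important}, but contracting only \emph{two} vertices (rather than three) creates a gap that the lift cannot repair: the ordered pair $(x_2,x_1)$ is not handled.

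Concretely, after orienting the red edge $x_1x_2$ as $\overrightarrow{x_1x_2}$, you need a directed path of length at most two from $x_2$ to $x_1$. Such a path must have the form $x_2\to w\to x_1$ with $w$ red-adjacent to both $x_1$ and $x_2$, and in particular the two arcs $x_2w$ and $wx_1$ must point in \emph{opposite} senses relative to the set $\{x_1,x_2\}$. But your lift rule copies the single $O^\star$-orientation of $wx$ to both $wx_1$ and $wx_2$, so every such $w$ gets either $w\to\{x_1,x_2\}$ or $\{x_1,x_2\}\to w$, never the mixed orientation you would need. The vertices $w$ blue-adjacent to one of $x_1,x_2$ cannot serve as middle vertices either, since one of the two required red edges is missing. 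So in the orientation $O_R$ you build, $d(x_2,x_1)>2$. This is precisely why the contraction of \emph{three} mutually non-adjacent vertices in the proof of Lemma~\ref{la:important} works while contracting two does not: the red triangle on $\{x_1,x_2,x_3\}$ can be oriented as a directed $3$-cycle, giving both directions between every pair at distance at most two, whereas a single red edge gives only one direction.

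The paper's actual proof sidesteps the lifting entirely: since $m(B)=n-5<n-1$, the graph $B$ is disconnected, while $x_1,x_2$ (having common neighbours) lie in the same component. Picking any $x_3$ from a different component makes $\{x_1,x_2,x_3\}$ an independent set for which $N_2$ contains the two common neighbours of $x_1,x_2$, so $|N_2|\geq 2$, contradicting Lemma~\ref{la:important}. If you want to keep a self-contained contraction argument, the cleanest fix is exactly this: augment $\{x_1,x_2\}$ to an independent triple and invoke (or reprove) the three-vertex contraction.
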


\begin{proof} 
Suppose to the contrary that $B$ has two vertices $x_1$ and $x_2$ that share at least two
neighbors. Then $x_1$ and $x_2$ are in the same component of $B$. 
As $n(B)>1$ and $m(B)<n(B)-1$, $B$ is not connected.
Choose a vertex $x_3$ from another component. 
Then $x_1,x_2,x_3$ are independent vertices, for which the set $N_2$ of 
vertices having exactly two neighbors in $\{x_1, x_2, x_3\}$ has 
at least two elements, a contradiction to Lemma \ref{la:important}. 
\end{proof}

\section{On tree components of $B$}
\label{sec:On Tree Components of $B$}

Since $B$ has $n$ vertices and $n-5$ edges, $B$ is not connected. In this section we
give useful lower bounds on the number of components of $B$ that are trees, and we show
that for a given order $t$ we can find a union $F_t$ of tree components of $B$ whose
order is close to $t$ and excess is at most $-t$. 
This will be useful in finding reductions and further restricting the possible structure of
$B$ for a minimal counterexample.
Recall that the {\em excess} of a graph $H$ is defined as 
${\rm ex}(H)= m(H) - n(H)$.

\begin{lemma}\label{lem:treecomp}  
If $B$ contains a component $B_1$ that is not a tree, then $B$ has
at least ${\rm ex}(B_1)+5\ge 5$ components that are trees. If $B$ has only tree components,
it has exactly five components. 
\end{lemma}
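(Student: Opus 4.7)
The plan is a direct counting argument using the additivity of the excess function over connected components. I would begin by noting that because $G$ is a minimal counterexample, $B$ has $n$ vertices and exactly $n-5$ edges, so
\[
\mathrm{ex}(B)=m(B)-n(B)=(n-5)-n=-5.
\]
Since the excess is additive over disjoint unions, writing the components of $B$ as $C_1,\ldots,C_k$ gives $\sum_{i=1}^{k}\mathrm{ex}(C_i)=-5$.

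Next I would record the standard fact that for a \emph{connected} graph $C$ we have $\mathrm{ex}(C)\ge -1$, with equality exactly when $C$ is a tree (a tree on $p$ vertices has $p-1$ edges, and any connected non-tree has at least $p$ edges). Thus each tree component of $B$ contributes exactly $-1$ to $\mathrm{ex}(B)$, while each non-tree component contributes a nonnegative amount.

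From here the two statements of the lemma fall out immediately. If every component of $B$ is a tree and $t$ denotes their number, then $-t=\mathrm{ex}(B)=-5$, so $t=5$. Otherwise, let $B_1$ be a non-tree component, let $t$ denote the number of tree components, and let $B_1,B_2,\ldots,B_u$ be all non-tree components. Then
\[
-5=\sum_{i=1}^{u}\mathrm{ex}(B_i)+(-t),
\]
so
\[
t=5+\sum_{i=1}^{u}\mathrm{ex}(B_i)\;\ge\;5+\mathrm{ex}(B_1),
\]
because each $\mathrm{ex}(B_i)\ge 0$ for $i\ge 2$. Since $\mathrm{ex}(B_1)\ge 0$, this also yields $t\ge 5$, which is the desired bound.

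There is essentially no obstacle here: the entire argument is bookkeeping with the excess function, and the only input beyond the definition of ``minimal counterexample'' is the elementary observation that a connected graph is a tree iff its excess equals $-1$. The value of the lemma for the rest of the paper lies not in its proof but in guaranteeing an abundance of tree components, which later sections will exploit to build reductions.
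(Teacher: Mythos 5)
Your proof is correct and matches the paper's argument essentially line for line: both decompose $\mathrm{ex}(B)=-5$ over components, use that tree components contribute exactly $-1$ while non-tree components contribute at least $0$, and read off the two conclusions by simple bookkeeping.
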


\begin{proof}
Let $T_1, T_2,\ldots,T_k$ be the components of $B$ that are trees, and 
\\$B_1, B_2, \ldots, B_{\ell}$ be the components that are not trees. Then 
${\rm ex}(T_i) =-1$ for all $i\in \{1,2,\ldots,k\}$ and
${\rm ex}(B_i)\geq 0$ for all $i\in \{1,2,\ldots,\ell\}$.
Since $m(B)= n - 5$, we have ${\rm ex}(B)=-5$, and so
\[ -5 = {\rm ex}(B) 
    = \sum_{i=1}^k {\rm ex}(T_i) + \sum_{i=1}^{\ell} {\rm ex}(B_i)
    = -k + \sum_{i=1}^{\ell} {\rm ex}(B_i)  \]
If $B$ has no tree component (i.e. $\ell=0$), this gives $k=5$. Hence, $B$ has exactly five components.  
If $B$ contains a component that is not a tree, $B_1$ say, then this
yields 
\[ -5  = -k + \sum_{i=1}^{\ell} {\rm ex}(B_i) 
     \geq  -k + {\rm ex}(B_1), \]
and so $k\geq 5+ {\rm ex}(B_1)\ge 5$, as claimed.      
\end{proof}

\begin{lemma}  \label{la:forest}
Assume $B$ contains at least
$t$ tree components whose size does not exceed $m_0$. 
Then there exists $t_0$ with $t \leq t_0 \leq t + m_0$ such 
that some subset of the tree components in $B$ forms a 
forest $F_t$ satisfying $n(F_t)=t_0$ and 
${\rm ex}(F_t) \geq -t$.
If $B$ contains a tree of size $m_0$ where $t>m_0$, then we can choose $F_t$ such that
${\rm ex}(F_t) \geq -t+m_0$. 
\end{lemma}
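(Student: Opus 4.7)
The plan is to build $F_t$ greedily by picking tree components one at a time. Since a forest with $k$ components has excess $-k$, it is enough to bound the number of components we include from above.

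For the main statement, fix an arbitrary ordering $T_{i_1},\ldots,T_{i_t}$ of $t$ tree components of $B$ of size at most $m_0$ (these exist by hypothesis); set $F^{(j)}=T_{i_1}\cup\cdots\cup T_{i_j}$ and let $j^\ast$ be the smallest index for which $n(F^{(j^\ast)})\ge t$. Such a $j^\ast$ exists and satisfies $j^\ast\le t$, since each tree contributes at least one vertex, so $n(F^{(t)})\ge t$. By the minimality of $j^\ast$ we have $n(F^{(j^\ast-1)})\le t-1$, and since $T_{i_{j^\ast}}$ has at most $m_0+1$ vertices (its size being at most $m_0$), we obtain $n(F^{(j^\ast)})\le t+m_0$. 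Setting $F_t=F^{(j^\ast)}$ and $t_0=n(F_t)$ yields a forest of excess $-j^\ast\ge -t$ with $t\le t_0\le t+m_0$, as required.

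For the improved bound, suppose $B$ contains a tree $T^\ast$ of size exactly $m_0$ (so $n(T^\ast)=m_0+1$) and $t>m_0$. Repeat the greedy procedure, but order the $t$ tree components so that $T^\ast$ is added first. If the process stops at $j^\ast=1$, then $m_0+1\ge t$, which combined with $t>m_0$ forces $t=m_0+1$, and the excess is $-1=-t+m_0$. Otherwise $j^\ast\ge 2$, and $F^{(j^\ast-1)}$ consists of $T^\ast$ together with $j^\ast-2$ further tree components, each contributing at least one vertex, so $n(F^{(j^\ast-1)})\ge(m_0+1)+(j^\ast-2)=m_0+j^\ast-1$. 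Together with the minimality condition $n(F^{(j^\ast-1)})<t$, this gives $j^\ast\le t-m_0$, whence the excess $-j^\ast\ge -t+m_0$.

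There is no serious obstacle here; the argument is essentially bookkeeping on a greedy stopping rule, relying on the trivial bounds $1\le n(T)\le m_0+1$ for any tree component $T$ of size at most $m_0$. The one point worth highlighting is why placing $T^\ast$ first upgrades the excess bound: the ``initial investment'' of $m_0+1$ vertices from a single component means fewer subsequent components are needed to reach the threshold $t$, and since it is precisely the component count that determines the excess, this translates into the gain of $+m_0$ in the excess bound.
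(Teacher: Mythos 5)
Your proof is correct and takes essentially the same greedy stopping-rule approach as the paper; the only cosmetic difference is that you track the excess via the exact identity that a forest with $k$ components has excess $-k$ rather than lower-bounding $m(F_t)$ by the sizes of the first and last trees added. You also explicitly handle the boundary case $j^\ast = 1$ (which occurs when $t = m_0+1$), a case the paper's phrasing glosses over when it asserts $j \ge 2$.
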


\begin{proof}  
Let $T_1, T_2, \ldots, T_t$ be the $t$ largest tree
components of $B$ whose size does not exceed $m_0$. 
Clearly $T_1 \cup T_2 \cup \cdots \cup T_t$ contains at least 
$t$ vertices. Let $j$ be the smallest positive integer 
such that $T_1 \cup T_2 \cup \cdots \cup T_j$ contains $t$ or
more vertices. Let $F_t=T_1 \cup T_2 \cup \cdots T_j$ 
and let $t_0=n(F_t)$.
Since $T_j$ has size at most $m_0$ and thus order at most $m_0+1$, we
have $t \leq t_0 \leq t+m_0$. Moreover, since 
$T_1 \cup T_2 \cup \cdots \cup T_{j-1}$ has less than $t$ vertices,
it follows that $T_j$ has at least $t_0-t+1$ vertices and
at least $t_0-t$ edges. Hence $m(F_t) \geq m(T_j) \geq t_0-t$,
and thus ${\rm ex}(F_t) \geq -t$. \\
If $t>m_0$, we have that $j\ge 2$ and $T_1$ has size
$m_0$. The same argument as above yields that
$m(F_t) \geq m(T_1) + m(T_j) = m_0 + t_0-t$ and thus
${\rm ex}(F_t) \geq -t+m_0$, as desired.  
\end{proof}

\section{Describing the components of $B$}
\label{section:describing the components of B}

In this section further restrict the structure of $B$ in a minimal
counterexample. We show that each component of $B$ is either a a path on at most 
four vertices, a complete graph, a proper dumbbell, a proper short dumbbell,
or a $5$-cycle, and none of these components have order more than six.

\begin{lemma} \label{la:C5-complete-dumbbell}
Let $G$ be a minimal counterexample and $B_1$ a component of $B$. 
\begin{enumerate}[{\rm (a)}]
\item If $B_1$ is a tree, then $B_1$ is a path $P_i$ with $1 \leq i \leq 4$. 
\item If $B_1$ is not a tree, then $B_1$ is one of the following:
\begin{enumerate}[{\rm (i)}]
\item a complete graph $K_i$ with $i\geq 3$,
\item a proper dumbbell,
\item a proper short dumbbell, or
\item a $5$-cycle.
\end{enumerate}
\end{enumerate}
\end{lemma}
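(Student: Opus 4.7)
The plan is to handle parts (a) and (b) separately, using Lemmata \ref{lem:alpha2} and \ref{la:no-two-indep-vertices-share-two-neighbors} throughout, together with a Ramsey-type argument in the triangle-free subcase and a clique-attachment analysis in the triangle-containing subcase.

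For (a), I argue as follows. By Lemma \ref{lem:alpha2}, $\alpha(B_1) \le 2$. If $B_1$ is a tree with a vertex of degree $\ge 3$, then the three neighbors of that vertex form an independent set (trees are triangle-free), contradicting $\alpha(B_1) \le 2$; hence $B_1$ is a path. If $B_1 = P_i$ with $i \ge 5$, the two endpoints together with the central vertex are pairwise non-adjacent, again a contradiction. So $B_1 \in \{P_1, P_2, P_3, P_4\}$.

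For (b), the starting observation is that Lemma \ref{la:no-two-indep-vertices-share-two-neighbors} excludes a shortest cycle of length four in $B_1$ (on a shortest $4$-cycle, the two pairs of opposite vertices are each non-adjacent but share two common neighbors), so the girth of $B_1$ is $3$ or $5$. If the girth is $5$, then $B_1$ is triangle-free, and $\alpha(B_1) \le 2$ combined with Ramsey ($R(3,3) = 6$) forces $|V(B_1)| \le 5$; triangle-freeness forces maximum degree at most $2$ (else the neighborhood of a vertex of degree $3$ is an independent triple), and the only connected cycle-containing triangle-free graph on at most five vertices with $\alpha \le 2$ is $C_5$.

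Suppose now that $B_1$ contains a triangle, and let $K$ be a maximum clique of $B_1$, of size $k \ge 3$. For each $w \notin K$, maximality of $K$ together with Lemma \ref{la:no-two-indep-vertices-share-two-neighbors} (applied to $w$ and any $x \in K \setminus N(w)$) forces $|N(w) \cap K| \in \{0, 1\}$. I then split $V(B_1) \setminus K = A \cup Z$, where $A$ consists of the vertices with a unique neighbor $f(w) \in K$ and $Z$ of those with none. A sequence of judicious applications of Lemma \ref{lem:alpha2} to independent triples of the form $\{a, a', z\}$, $\{w, w', x\}$, and $\{w, a, x\}$ (with $a, a' \in A$, $w, w' \in Z$, $x, z \in K$) shows that $f$ is constant on $A$ (say $f \equiv x$), that $Z$ is a clique, and that every vertex of $Z$ is adjacent to every vertex of $A$. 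A further application of Lemma \ref{la:no-two-indep-vertices-share-two-neighbors} to the non-adjacent pair $\{w, x\}$ for $w \in Z$ rules out $|A| \ge 2$ when $Z \ne \emptyset$; connectivity forbids $|A| = 0$ when $Z \ne \emptyset$; and a last appeal to Lemma \ref{lem:alpha2} shows that $A$ is itself a clique in the case $Z = \emptyset$ with $|A| \ge 2$. The remaining four configurations $A = Z = \emptyset$; $|A| = 1$ with $Z = \emptyset$; $|A| \ge 2$ with $Z = \emptyset$; and $|A| = 1$ with $Z \ne \emptyset$ then yield $K_k$, $D_{k,1}$, the proper short dumbbell $S_{k, |A|+1}$ (both cliques of size $\ge 3$), and the proper dumbbell $D_{k, 1+|Z|}$ (since $k \ge 3$), respectively. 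The main obstacle is not any single hard step but the careful, repeated application of Lemmata \ref{lem:alpha2} and \ref{la:no-two-indep-vertices-share-two-neighbors} to exactly the right triples and pairs in the girth-$3$ case; once that bookkeeping is in hand, identification of $B_1$ with one of the named graphs is immediate.
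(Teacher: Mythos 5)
Your approach to (b) is genuinely different from the paper's. The paper picks two vertices $x_1,x_2$ of $B_1$ at maximum distance, uses Lemma \ref{lem:alpha2} to get ${\rm diam}(B_1)\le 3$ and $V(B_1)=N_B[x_1]\cup N_B[x_2]$, uses Lemma \ref{la:no-two-indep-vertices-share-two-neighbors} to get $|N_B(x_1)\cap N_B(x_2)|\le 1$, and then splits on whether the diameter is $3$ (giving a dumbbell) or $2$ (giving a short dumbbell, or an induced $C_5$ with no further vertices). You instead extract a maximum clique $K$, observe that each outside vertex sends $0$ or $1$ edges into $K$, and classify by the sizes of $A$ and $Z$ in $V(B_1)\setminus K$. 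Both routes are of similar length; yours locates the dumbbell/short-dumbbell structure somewhat more explicitly, at the cost of a girth/Ramsey side-argument for the $C_5$ case and a more involved bookkeeping of triples in the triangle case. Your part (a) is also more direct: the paper simply notes that (a) is subsumed by the classification in (b).

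Two small but genuine gaps need patching. First, the claim that $f$ is constant on $A$ does not follow from Lemma \ref{lem:alpha2} alone: if $a,a'\in A$ have $f(a)\ne f(a')$ and happen to be \emph{adjacent}, then $\{a,a',z\}$ is not independent for any $z\in K$, so the cited triple gives nothing. You need Lemma \ref{la:no-two-indep-vertices-share-two-neighbors} here: $a$ and $f(a')$ are non-adjacent yet share the two common neighbors $f(a)$ and $a'$. (You invoke that lemma elsewhere, so the fix is free, but the step as stated is wrong.) Second, "so the girth of $B_1$ is $3$ or $5$" does not follow from excluding girth $4$; you must also exclude girth $\ge 6$, e.g.\ by observing that an induced cycle of length at least $6$ contains three pairwise non-adjacent vertices, contradicting Lemma \ref{lem:alpha2}. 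Relatedly, the statement that "the only connected cycle-containing triangle-free graph on at most five vertices with $\alpha\le 2$ is $C_5$" is false as written ($C_4$ qualifies); it is true only once you have already excluded $4$-cycles, so the phrasing should say so explicitly, and you should also note that the presence of a $5$-cycle and the Ramsey bound force $|V(B_1)|=5$ exactly, whence any extra edge would create a triangle or a $4$-cycle.
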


\begin{proof}
As any tree that is not a path on at most $4$ vertices is not a complete graph, a dumbbell or a short dumbblell, it is enough to show that $B_1$ is a complete graph, 
a dumbbell,
a short dumbbell or a $5$ cycle.\\[1mm] 
If $B_1$ is complete, then the lemma holds, so assume that $B_1$ is not complete. 
Let $x_1$ and $x_2$ be two vertices of $B_1$ with 
$d_B(x_1, x_2) ={\rm diam}(B_1)\geq 2$. By Lemma~\ref{lem:alpha2} $B_1$ does not have three independent vertices, so
$d_B(x_1, x_2) ={\rm diam}(B_1)\leq 3$ and $V(B_1)=N_B(x_1)\cup N_B(x_2)$, and 
$|N_B(x_1)\cap N_B(x_2)|\leq 1$ by Lemma~\ref{la:no-two-indep-vertices-share-two-neighbors}. \\[1mm]
{\sc Case 1:} ${\rm diam}(B_1) = 3$ (consequently $N_B(x_1)\cap N_B(x_2)=\emptyset$). \\
Since $B_1$ does not have three independent vertices by Lemma \ref{lem:alpha2}, 
we conclude that each $N_B[x_i]$ forms a clique. \\
Since $B_1$ is connected, $B_1$ has an edge joining a vertex $y_1 \in N_B(x_1)$ to a 
vertex $y_2 \in N_B(x_2)$. We show that $B_1$ does not contain a further edge joining a vertex
$z_1\in N_B(x_1)$ to a vertex $z_2 \in N_B(x_2)$ by using that Lemma~\ref{la:no-two-indep-vertices-share-two-neighbors} gives
that two independent vertices share at most one neighbor. Indeed, if $y_1=z_1$, then 
$\{y_1, x_2\}$ would be a set of two independent vertices that share two neighbors.
If $y_2=z_2$, then 
$\{y_2, x_1\}$ would be a set of two independent vertices that share two neighbors.
Lastly, if $y_1 \neq z_1$ and $y_2\neq z_2$, then $\{y_1, z_2\}$ 
would be a set of two independent vertices that share two neighbors.
It follows that $B_1$ is a dumbbell.\\[1mm] 
{\sc Case 2:} ${\rm diam}(B_1) =2$ (consequently $N_B[x_1]\cap N_B[x_2]=\{y\}$) . \\
We consider two subcases: \\[1mm]
If $\min({\rm deg}_B(x_1),{\rm deg}_B(x_2))=1$, then
without loss of generality ${\rm deg}_B(x_1)=1$
and $N_B(x_1)=\{y\}$.  
Since ${\rm diam}(B_1)=2$, every vertex in $V(B_1) -\{x_1, y\}$ is 
adjacent to $y$ in $B_1$. Since $B_1$ does not contain three independent 
vertices, $V(B_1)-\{x_1, y\}$ induces a complete graph in $B_1$. 
Therefore $B_1$ is a short dumbbell. \\[1mm]
If $\min({\rm deg}_B(x_1),{\rm deg}_B(x_2))\geq 2$, then,
since $B_1$ does not contain three independent vertices, 
$N_B[x_i]\setminus\{y\}$ induces a complete graph in $B$ 
for $i\in\{1,2\}$. 
If $y$ is adjacent to all vertices in $B_1$, then 
$B_1$ is a short dumbbell and we are done. Assume without loss of generality 
that there is a vertex
$z_1\in N_B[x_1]$ to which $y$ is non-adjacent in  $B_1$. 
Then $d_B(z_1,x_2)=2$, so $z_1$ and $x_2$
have a common blue neighbor $z_2$. Since $x_1$ and $z_2$
are non-adjacent in $B$ and thus cannot have two common neighbors,
$z_2$ and $y$ are non-adjacent in $B$.  Since also the edges
$x_1x_2$, $x_1z_2$ and $x_2z_1$ are not present in $B$, we 
conclude that $x_1, y , x_2, z_2, z_1, x_1$ form an induced
$5$-cycle in $B_1$. Hence $B_1$ contains an induced $5$-cycle. \\
Rename the vertices of the $5$-cycle so the cycle is
$v_0v_1v_2v_3v_4v_0$. 
Suppose there is a sixth vertex $w$ adjacent to a vertex in 
$\{v_0, v_1, v_2, v_3, v_4\}$ in $B_1$. If $|N_B(w)\cap\{v_0,\ldots,v_4\}|\le 2$, it
is easy to see that $v$ together with two suitably chosen vertices
in $\{v_0, v_1, v_2, v_3, v_4\}$ forms an independent set of
cardinality three, which is impossible. Hence $v$ is adjacent 
to at least three vertices in $\{v_0, v_1, v_2, v_3, v_4\}$.
But then $v$ has two neighbors among these vertices that are
not adjacent, without loss of generality $v_1$ and $v_3$, so
that $v_1$ and $v_3$ are non-adjacent vertices with two common
neighbors, a contradiction to Lemma \ref{la:no-two-indep-vertices-share-two-neighbors}. This proves that $B_1$ contains only
$\{v_0, v_1, v_2, v_3, v_4\}$, and so $B_1$ is a $5$-cycle.  
\end{proof}

\begin{lemma} \label{la:no-large-component}
In a minimal counterexample all components of $B$ are of order at most six.
\end{lemma}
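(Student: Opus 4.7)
\medskip
\textbf{Plan.} Suppose for a contradiction that $B$ has a component $B_1$ with $b:=n(B_1)\ge 7$. Since tree components of $B$ are paths on at most four vertices and $C_5$ has only five vertices, Lemma~\ref{la:C5-complete-dumbbell} forces $B_1$ to be a complete graph $K_b$, a proper dumbbell, or a proper short dumbbell. A short enumeration of these options shows that $e:={\rm ex}(B_1)\ge 3$, with equality if and only if $B_1=D_{4,3}$; moreover no admissible $B_1$ has $e=4$, so $e\ge 5$ whenever $B_1\ne D_{4,3}$. The goal is to produce a reduction of $B$ containing $V(B_1)$, contradicting Lemma~\ref{lem:no-reduction}.

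\medskip
\textbf{Constructing $W$.} By Lemma~\ref{lem:treecomp}, $B$ has at least $e+5$ tree components, and by Lemma~\ref{la:C5-complete-dumbbell}(a) each is a path of size at most three. Apply Lemma~\ref{la:forest} with $t=e+1$ and $m_0=3$ to produce a forest $F$, formed by a subset of the tree components of $B$, with $a:=n(F)\in[e+1,e+4]$ and ${\rm ex}(F)\ge -(e+1)$. Set $W=V(B_1)\cup V(F)$. Since $B_1$ and the components constituting $F$ are distinct components of $B$, we have $B[W]=B_1\cup F$, giving ${\rm ex}(B[W])=e+{\rm ex}(F)\ge -1$; moreover every edge of $R$ between $V(B_1)$ and $V(F)$ is present, so $R[W]$ contains $K_{a,b}$ with partite sets $V(F)$ and $V(B_1)$ as a spanning subgraph.

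\medskip
\textbf{Orienting $R[W]$.} Once we exhibit a non-trivial good orientation of $R[W]$, the set $W$ is a reduction, contradicting Lemma~\ref{lem:no-reduction}. If $B_1\ne D_{4,3}$, then $a\ge 6$, and the hypothesis of Lemma~\ref{lem:Kab} is met: when $a\le b$ the bound $a\ge e+1$ forces $b\le 9$, and $\binom{a}{\lfloor a/2\rfloor}\ge \binom{6}{3}=20\ge b$; when $a>b$ one uses $a\le e+4\le \binom{b}{2}-b+4$, which does not exceed $\binom{b}{\lfloor b/2\rfloor}$ for $b\ge 7$. If instead $B_1=D_{4,3}$, then $b=7$, $e=3$, and $a\in\{4,5,6,7\}$. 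For $a\in\{5,6,7\}$ Lemma~\ref{lem:Kab} applies since $b=7\le\binom{a}{\lfloor a/2\rfloor}$. For $a=4$ apply Lemma~\ref{lem:Kabsubgraph} with $X=V(F)$ and $Y=V(B_1)$: the chain $3\le 4\le 7\le 2\cdot 4$ holds, and $B[Y]=D_{4,3}\trianglelefteq K_4\boxplus K_3$, because both graphs consist of a $K_4$ and a $K_3$ component joined by edges, with $D_{4,3}$ contributing one joining edge and $K_4\boxplus K_3$ three.

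\medskip
\textbf{Main obstacle.} The tightest case is $B_1=D_{4,3}$ with $n(F)=4$: the reduction inequality ${\rm ex}(B[W])\ge -1$ is achieved with equality, Lemma~\ref{lem:Kab} fails because $\binom{4}{2}=6<7=b$, and the argument is rescued only by Lemma~\ref{lem:Kabsubgraph} together with the embedding $D_{4,3}\trianglelefteq K_4\boxplus K_3$. In every other parameter regime the excess budget and the binomial coefficient inequality leave noticeable slack, so the verifications are routine.
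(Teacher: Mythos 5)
Your proof is correct. You follow the same broad strategy as the paper -- assume $B_1$ has $b\ge 7$ vertices, invoke Lemma~\ref{la:C5-complete-dumbbell} to pin down its structure, bound its excess $e$ from below, and then glue $B_1$ together with a forest $F$ extracted via Lemma~\ref{la:forest} so that $W=V(B_1)\cup V(F)$ is a reduction -- but you differ from the paper in two places that are worth noting.

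First, you parameterize the forest by $t=e+1$, which exactly matches the excess budget, whereas the paper uses $t=n_1-2$ and the auxiliary bound ${\rm ex}(B_1)\ge n_1-3$. The two parameterizations deliver the same $-1$ at the bottom line, but yours is a bit more transparent about why the budget closes. Second, and more substantively, your treatment of the boundary case $B_1=D_{4,3}$ is genuinely different: you handle the tight subcase $n(F)=4$ directly by Lemma~\ref{lem:Kabsubgraph}, observing $D_{4,3}\trianglelefteq K_4\boxplus K_3$. The paper instead splits on whether some tree component has positive size, invokes the second clause of Lemma~\ref{la:forest} to improve the excess, and, when all trees are singletons, either appeals to the computer-verified orientation of $\overline{D_{4,3}\cup 8K_1}$ in Lemma~\ref{la:Garner-list}~(2) or introduces a second non-tree component $B_2$ and forms a different reduction. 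Your route buys independence from that one explicit-orientation item in Lemma~\ref{la:Garner-list} and avoids the two-layer subcase analysis, at the small cost of one more application of Lemma~\ref{lem:Kabsubgraph}. The unspelled-out inequalities you assert ($e\ge 5$ for $B_1\ne D_{4,3}$ with $b\ge 7$; $a\le b$ forces $b\le 9$; $\binom{b}{2}-b+4\le\binom{b}{\lfloor b/2\rfloor}$ for $b\ge 7$) all check out on a finite enumeration plus growth comparison, so there is no gap, just terseness.
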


\begin{proof}
Suppose to the contrary that $B$ contains a component $B_1$ 
with more than six vertices. Let $n_1\ge 7$ and $m_1$ be the order
and size, respectively, of $B_1$. By Lemma \ref{la:C5-complete-dumbbell}, 
$B_1$ is a complete graph,
a dumbbell, or a short dumbbell. It is easy to see that among 
all such graphs of order $n_1$ the dumbbell 
$D_{\lceil n_1/2 \rceil, \lfloor n_1/2 \rfloor}$ has minimum size, and every other graph has bigger size. 
A simple calculation shows that 
\begin{equation} 
m_1 \geq m(D_{\lceil n_1/2 \rceil, \lfloor n_1/2 \rfloor})\ge \left\lceil \frac{1}{4}n_1^2 - \frac{1}{2}n_1 + 1 \right\rceil,
\end{equation}
and consequently
\begin{equation} \label{eq:no-large-component-excess}
{\rm ex}(B_1)= m_1-n_1\geq \left\lceil \frac{(n_1-3)^2-5}{4}\right\rceil, 
\end{equation}
where equality holds only when $B=D_{\lceil n_1/2 \rceil, \lfloor n_1/2 \rfloor}$.\\ [1mm]
Assume first that $B_1\ne D_{3,4}$.
If $n_1\ge 8$, equation \eqref{eq:no-large-component-excess} easily gives ${\rm ex}(B_1)\ge n_1-3$.
If $n_1=7$, then, as the lower bound in \eqref{eq:no-large-component-excess} is only sharp when $B_1=D_{3,4}$, it follows that
 ${\rm ex}(B_1) \geq 4=n_1-3$.
By Lemma \ref{lem:treecomp}, $B$ contains at least ${\rm ex}(B_1) + 5\ge n_1+2$ 
tree components. Set $t=n_1-2$. 
By Lemma~\ref{la:forest}, for some $t_0$ with $n_1-1 \leq t_0 \leq n_1+2$, 
$B$ contains a forest $F_{t}$ of order $t_0$ and excess at
least $-t=-n_1+2$ that is the union of the tree components of $B$. 
Let  $W:=V(B_1) \cup V(F_{t})$. We show that $W$ is a reduction. 
Clearly the graph $R[W]$ contains a
spanning subgraph $K_{n_1,t_0}$. Since $n_1-1 \leq t_0 \leq n_1+2$, 
it is easy to verify that  either $n_1 \leq t_0 \leq \binom{n_1}{2}$
or $t_0<n_1\leq \binom{t_0}{2}$. So $R[W]$
has a non-trivial good orientation by Lemma \ref{lem:Kab}, and
${\rm ex}(B[W]) =  {\rm ex}(B_1) + {\rm ex}(F_{n_1-2}) \geq   n_1-3 + (-n_1 +2) =-1$. Hence, $W$ is a reduction, a contradiction
to Lemma \ref{lem:no-reduction}. So we must have that $B_1=D_{3,4}$ \\[1mm]
If $B_1=D_{3,4}$,  ${\rm ex}(B_1)= 3$ by equation \eqref{eq:no-large-component-excess}, and
$B$ has at least $8$ tree components.
Set $m_0$ be the size of the largest tree component. If $1\le m_0$,   
Lemma \ref{la:forest} with $t=5$ and $1\leq m_0\leq 3$ yields that
there exists a forest $F_5$ in $B$ of order $t_0$, where $5\leq t_0 \leq 8$,
and excess at least $-5+1=-4$. Let $W=V(B_1) \cup V(F_5)$, then
${\rm ex}(B[W]) = {\rm ex}(B_1) + {\rm ex}(F_5) 
       \geq 3 + (-4) = -1$, and
$R[W]$ has a non-trivial good orientation by Lemma \ref{lem:Kab}.        
Hence $W$ is a reduction, a contradiction to Lemma \ref{lem:no-reduction}.  So all tree components of $B$ are singletons.
If all $k$ components of $B-B_1$ are $P_1$, then $-5={\rm ex}(B)=3-k$ gives
$B=D_{3,4}\cup 8P_1$.
But by Lemma \ref{la:Garner-list}, 
$\overline{D_{3,4}\cup 8P_1}$ has an orientation of diameter two, which is a contradiction.
Therefore $B$ contains another non-tree component $B_2$ with
at least one edge, so it has at least $3$ (and by our proof so far, at most $7$) vertices. Set $W=B_1\cup B_2\cup 2P_1$. By
Lemma~\ref{lem:Kab}, $W$ has a non-trivial good orientation with partition classes
$B_1$ and $B_2\cup 2P_1$ and ${\rm ex}(B[W])\ge 3-2>0$. So $W$ is a reduction, which is a contradiction.
\end{proof}

\begin{lemma} \label{la:no-two-non-trees}
If a minimal counterexample $B$ contains a component
$B_1$ that is not a tree, then $B-B_1$ has exactly  
${\rm ex}(B_1)+5$ components, all of which are trees. 
\end{lemma}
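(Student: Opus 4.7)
The plan is to prove the contrapositive by contradiction: assume $B - B_1$ contains a second non-tree component $B_2$ besides $B_1$, and derive a reduction of $B$, contradicting Lemma \ref{lem:no-reduction}. Once this possibility is excluded, the precise count of ${\rm ex}(B_1) + 5$ tree components in $B - B_1$ is forced by ${\rm ex}(B) = -5$, since every tree contributes $-1$ to the excess.

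By Lemmata \ref{la:C5-complete-dumbbell} and \ref{la:no-large-component}, both orders $n_1 = n(B_1)$ and $n_2 = n(B_2)$ lie in $[3,6]$ (assume without loss of generality $n_1 \le n_2$), and each $B_i$ is a complete graph, a proper dumbbell, a proper short dumbbell, or $C_5$. Setting $e_i = {\rm ex}(B_i) \ge 0$, the excess equation yields that $B$ has exactly $k = 5 + e_1 + e_2 \ge 5$ tree components. The strategy is to take $W = V(B_1) \cup V(B_2)$, possibly augmented by a few tree components, and verify that $W$ is a reduction: all edges between distinct components of $B$ are red, so a suitable bipartition of $W$ induces a spanning complete bipartite graph in $R[W]$ to which Lemma \ref{lem:Kab} or Lemma \ref{lem:Kabsubgraph} applies; the excess condition follows because $B_1$ and $B_2$ contribute nonnegatively.

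The case analysis has three parts. First, if $n_1 \ge 4$ or $n_1 = n_2 = 3$, a direct check gives $n_2 \le \binom{n_1}{\lfloor n_1/2 \rfloor}$, so Lemma \ref{lem:Kab} with partition $V(B_1), V(B_2)$ makes $W = V(B_1) \cup V(B_2)$ a reduction with excess $e_1 + e_2 \ge 0 \ge -1$. Second, if $B_1 = K_3$ and $B_2 \trianglelefteq K_3 \boxplus K_{n_2-3}$, which by a direct check happens precisely when $B_2 \in \{D_{3,1}, D_{3,2}, C_5, D_{3,3}\}$, then Lemma \ref{lem:Kabsubgraph} with $X = V(B_1), Y = V(B_2)$ gives the reduction. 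The remaining sub-case is $B_1 = K_3$ and $B_2 \in \{K_4, K_5, D_{4,1}, S_{3,3}, K_6, D_{4,2}, D_{5,1}, S_{3,4}\}$; here $e_2 \ge 1$ and $k \ge 6$, so I augment $W$ with a forest $F$ of tree components (chosen using Lemma \ref{la:forest} or explicitly when needed) and partition the resulting vertex set into unions of components of $B$ satisfying Lemma \ref{lem:Kab}, with the excess constraint ${\rm ex}(F) \ge -1 - e_2$ leaving plenty of room.

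The main obstacle is this final sub-case. The tightest situation arises when $B_2 = K_4$ and every tree component of $B$ is a $P_4$: no tree of order at most $3$ is available to balance a $K_{4,4}$, but taking $F = 2P_4$ and partitioning $X = V(B_2) \cup V(P_4)$, $Y = V(B_1) \cup V(P_4)$ gives $|X| = 8$, $|Y| = 7$, and $\binom{7}{3} = 35 \ge 8$, so Lemma \ref{lem:Kab} applies; the excess is $0 + 2 - 2 = 0 \ge -1$, so $W$ is a reduction. Analogous (generally easier) constructions handle the other sub-cases uniformly, completing the contradiction.
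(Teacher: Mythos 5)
Your proposal is correct and reaches the same contradiction (a reduction must exist) via the same core tools (Lemmata \ref{lem:no-reduction}, \ref{lem:Kab}, \ref{lem:Kabsubgraph}), but the case decomposition is genuinely different from the paper's. The paper first disposes of the case where both $n(B_1),n(B_2)\geq 4$ (or both equal $3$) directly, then in the mixed case $\{3\}\times\{4,5,6\}$ it shows that no $P_3$ or $P_4$ tree component can exist (each would already form a reduction with one of $B_1,B_2$), forcing all tree components to be $P_1$ or $P_2$, and finally augments $V(B_1)\cup V(B_2)$ with a single such short path; Lemma~\ref{lem:Kabsubgraph} is never invoked. You instead keep the possibility of large trees alive and split the mixed case by whether $B_2\trianglelefteq K_3\boxplus K_{n_2-3}$, which lets Lemma~\ref{lem:Kabsubgraph} dispatch $\{D_{3,1},D_{3,2},C_5,D_{3,3}\}$ without any augmentation, and then handle the remaining $B_2$ (those containing a $K_4$ or $S_{3,3}$, all of excess at least $1$) by adding one or two tree components. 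Your classification of which $B_2$ embed in $K_3\boxplus K_{n_2-3}$ is correct, and your worked tight case ($B_2=K_4$ with only $P_4$ trees, augmented by $2P_4$ to get a $K_{7,8}$) is valid. The one soft spot is the closing sentence, \textquotedblleft analogous (generally easier) constructions handle the other sub-cases uniformly\textquotedblright; this is true (taking $W=V(B_1)\cup V(B_2)\cup V(T)$ for a single tree $T$ works whenever $n_2\geq 5$ or $|T|\leq 3$, and $B_2=K_4$ with only $P_4$'s is exactly the case you handled), but a complete write-up would need to spell out this verification rather than assert it. The paper's route avoids this by eliminating large trees up front, which is arguably cleaner; your route trades that step for a use of Lemma~\ref{lem:Kabsubgraph}, which collapses several cases at once. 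Both are sound.
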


\begin{proof}
Suppose to the contrary that $B$ contains two non-tree components $B_1$ and $B_2$
with $3\le n(B_1)\le n(B_1)$. Then ${\rm ex}(B_1) \geq 0$ and ${\rm ex}(B_2) \geq 0$,
and by Lemma \ref{la:no-large-component}  $n(B_1) \leq 6$. 
 If $n(B_1)=n(B_2)=3$ or $n(B_1), n(B_2) \in \{4,5,6\}$, then 
$V(B_1) \cup V(B_2)$ has a non-trivial good orientation by Lemma \ref{lem:Kab}
and is thus a reduction, since ${\rm ex}(B_1 \cup B_2) = {\rm ex}(B_1) + {\rm ex}(B_2) \geq 0$. So we have
$n_1 \in \{4,5,6\}$ and $n_2=3$.
As $V(B_1) \cup V(P_4)$
or $V(B_2) \cup V(P_3)$ would form a reduction, all tree components in $B$ are $P_1$ or $P_2$.
Since $V(B_1) \cup V(B_2) \cup V(P_i)$ forms a reduction for $i\in\{1,2\}$,
this is a contradiction to Lemma \ref{lem:no-reduction}.
Hence all $k$ components of $B-B_1$ are trees. 
As $-5 = {\rm ex}(B) =  {\rm ex}(B_1) - k$
we are done.
\end{proof}

\begin{lemma}\label{la:small-trees}
Assume $B$ contains a non-tree component $B_1$. Let $F$ be a forest that is the union of
the smallest number of tree components of $B$ such that $\min(4,n(B_1))\le n(F)\le 6$ and $k_0$ be the number of tree components that make up $F$.
Then $k_0\geq {\rm ex}(B_1)+2$, ${\rm ex}(B_1)\le 2$, and the tree components of $B$ contain at most  $\min(3,n(B_1)-1)$ vertices.
\end{lemma}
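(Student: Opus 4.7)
The strategy is to leverage Lemma~\ref{lem:no-reduction}: since a minimal counterexample admits no reduction, any union of components $W$ for which $R[W]$ has a non-trivial good orientation must satisfy ${\rm ex}(B[W]) < -1$. I will apply this with $W = V(B_1) \cup V(F)$ to get the first conclusion, and with $W' = V(B_1) \cup V(T)$ for an oversized tree component $T$ to get the third.

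\emph{Existence of $F$ and the first conclusion.} By Lemma~\ref{la:C5-complete-dumbbell} every tree component is a path $P_i$ with $1\le i\le 4$, and by Lemma~\ref{la:no-two-non-trees} there are exactly ${\rm ex}(B_1)+5\ge 5$ of them. A simple greedy argument (adding tree components one at a time and noting that each addition bumps the total by at most $4$) shows that the running total hits the interval $[\min(4,n(B_1)),6]$, so the minimum $k_0$ is well-defined. Because $B_1$ and the components forming $F$ are distinct components of $B$, there are no blue edges between $V(B_1)$ and $V(F)$, so $R[W]$ contains a spanning $K_{n(B_1),n(F)}$. I will obtain a non-trivial good orientation of $R[W]$ via Lemma~\ref{lem:Kab} when $\min(n(B_1),n(F))\ge 4$ (using $\max(n(B_1),n(F))\le 6\le\binom{4}{2}$ and Lemma~\ref{la:no-large-component}); via Lemma~\ref{lem:Kab} again when $n(B_1)=n(F)=3$; and via Lemma~\ref{lem:Kabsubgraph} when $n(B_1)=3$ and $n(F)\in\{4,5,6\}$, using that $F$, as a disjoint union of paths, satisfies $F\trianglelefteq P_{n(F)}\trianglelefteq K_3\boxplus K_{n(F)-3}$. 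Lemma~\ref{lem:no-reduction} then yields ${\rm ex}(B[W])={\rm ex}(B_1)-k_0<-1$, i.e.\ $k_0\ge {\rm ex}(B_1)+2$.

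\emph{The third conclusion.} Suppose for contradiction that some tree component $T$ has $n_T>\min(3,n(B_1)-1)$ and set $W'=V(B_1)\cup V(T)$. If $n(B_1)=3$ then $B_1=K_3$, ${\rm ex}(B_1)=0$, and $n_T\in\{3,4\}$: Lemma~\ref{lem:Kab} (for $n_T=3$) or Lemma~\ref{lem:Kabsubgraph} (for $n_T=4$, using $P_4\trianglelefteq K_3\boxplus K_1$) produces a non-trivial good orientation of $R[W']$, while ${\rm ex}(B[W'])=-1$. If $n(B_1)\ge 4$ then $n_T=4$ forces $T=P_4$, Lemma~\ref{lem:Kab} applies with $a=4$ and $b=n(B_1)\le 6$, and ${\rm ex}(B[W'])={\rm ex}(B_1)-1\ge -1$. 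Either way $W'$ is a reduction, contradicting Lemma~\ref{lem:no-reduction}.

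\emph{The second conclusion.} When $n(B_1)=3$ we have $B_1=K_3$ and ${\rm ex}(B_1)=0\le 2$. When $n(B_1)\ge 4$, the third conclusion ensures every tree component has size at most $3$; I will bound $k_0\le 4$ by case analysis on the tree components:\ if some $P_3$ exists, take it together with any other component to get $k_0=2$; if no $P_3$ exists but at least two $P_2$'s do, take two of them for $k_0=2$; if exactly one $P_2$ exists, take it with two $P_1$'s for $k_0=3$; otherwise all tree components are $P_1$ and I take four of them for $k_0=4$. Since there are at least $5$ tree components, all cases are realisable, so $k_0\le 4$, and the first conclusion gives ${\rm ex}(B_1)\le k_0-2\le 2$. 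The principal technical hurdle is producing the non-trivial good orientation of $R[W]$ in the case $n(B_1)=3$, $n(F)\in\{4,5,6\}$, where Lemma~\ref{lem:Kab} is unavailable for $a=3$, $b\ge 4$; the path structure of the tree components is essential, enabling the invocation of Lemma~\ref{lem:Kabsubgraph}.
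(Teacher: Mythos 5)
Your proof is correct and takes the same overall approach as the paper: obtain a non\hyp{}trivial good orientation of $R[V(B_1)\cup V(F)]$ or $R[V(B_1)\cup V(T)]$ via Lemmata~\ref{lem:Kab} and~\ref{lem:Kabsubgraph}, invoke Lemma~\ref{lem:no-reduction} to force the excess below $-1$, and combine the resulting inequality $k_0\geq{\rm ex}(B_1)+2$ with $k_0\leq 4$. One substantive refinement in your version is the treatment of the case $n(B_1)=3$, $n_T=4$: the paper rules out $P_4$ components citing Lemma~\ref{lem:Kab}, but that lemma requires $b\leq\binom{a}{\lfloor a/2\rfloor}$, which fails for $a=3$, $b=4$; you correctly route through Lemma~\ref{lem:Kabsubgraph} using $P_4\trianglelefteq K_3\boxplus K_1$. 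Your greedy remark about the existence of $F$ (each addition bumps the total by ``at most $4$'') is imprecise as stated, since adding a $P_4$ when the running total is $3$ would overshoot $6$; this is harmless here because the lemma stipulates $F$ as given and your third conclusion separately excludes $P_4$, while the paper avoids the issue by banning $P_4$ before introducing $F$. The remaining differences -- proving the conclusions in the order first, third, second, and spelling out the $k_0\leq4$ case analysis that the paper dismisses with ``clearly'' -- are presentational rather than strategic.

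\newcommand{\hyp}{-}
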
\

\begin{proof} If $B_1$ is a component that is not a tree, by Lemma~\ref{la:no-large-component} ${\rm ex}(B_1)\ge 0$ and $3\le n(B_1)\le 6$.
$B$ does not contain a $P_4$ component, otherwise
$W=V(B_1)\cup V(P_4)$ would form a reduction
by Lemma~\ref{lem:Kab} and ${\rm ex}(B[W])\ge -1$.

Let $F$ and $k_0$ be given as in the conditions of the lemma. Clearly, $k_0\le 4$
and ${\rm ex}(F)=-k_0$. Consider $W=V(B_1)\cup V(F)$. If $n(B_1)\ne 3$ or $n(B_1)=n(F)$, then $R[W]$ has a non-trivial good orientation by Lemma~\ref{lem:Kab}.
If $n(B_1)=3$ and $4\le n(F)\le 6$, then $B_1=K_3$, $F\trianglelefteq K_3\boxplus K_{n(F)-3}$, and $R[W]$ has a non-trivial good orientation by
Lemma~\ref{lem:Kabsubgraph}. As $W$ is not a reduction, we must have
$-2\ge {\rm ex}(B[W])={\rm ex}(B_1)-k$, giving $k\ge {\rm ex}(B_1)+2$. ${\rm ex}(B_1)\le 2$ follows from $k\le 4$. As $k\ge 2$, no tree component has size $n(B_1)$.
\end{proof}

\section{Proof of the main result}
\label{sec:main-result}

We start by eliminating the possibility of a non-tree component from a minimal counterexample.

\begin{lemma} \label{la:complete-graphs}
In a minimal counterexample no component of $B$ is a complete graph on three or more vertices.  
\end{lemma}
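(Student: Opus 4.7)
The plan is to argue by contradiction: suppose $B$ has a component $B_1\cong K_s$ with $s\ge 3$. Since $K_s$ has excess $\binom{s}{2}-s$ and is not a tree, Lemma~\ref{la:small-trees} forces ${\rm ex}(B_1)\le 2$, restricting $s$ to $\{3,4\}$; Lemma~\ref{la:no-two-non-trees} then says the remaining components of $B$ are trees (five when $s=3$, seven when $s=4$), and Lemma~\ref{la:small-trees} bounds their orders by $\min(3,n(B_1)-1)$. The goal is to derive a contradiction either directly from Lemma~\ref{la:Garner-list} or by constructing a reduction and invoking Lemma~\ref{lem:no-reduction}. When $s=3$ each tree has at most two vertices, so $B=K_3\cup aP_1\cup bP_2$ with $a+b=5$, and Lemma~\ref{la:Garner-list}(4) with $Q=K_3$ immediately yields a diameter-two orientation of $R$, contradiction.

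The substantive case is $s=4$, where each tree has at most three vertices and $B=K_4\cup a_1P_1\cup a_2P_2\cup a_3P_3$ with $a_1+a_2+a_3=7$. The extreme configuration $a_1=7$ is handled by Lemma~\ref{la:Garner-list}(1). For every other configuration I will exhibit a reduction $W=V(K_4)\cup V(X)$, where $X$ is a union of at most three tree components whose total order $t$ lies in $\{4,5,6\}$: take $X=2P_3$ if $a_3\ge 2$; otherwise $X=2P_2$ if $a_2\ge 2$; otherwise $X=P_3\cup P_1$ if $a_3=1$ (which forces $a_1\ge 5$); otherwise $a_3=0$, $a_2=1$, $a_1=6$ and $X=P_2\cup 2P_1$ works. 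Since $V(K_4)$ and $V(X)$ lie in different components of $B$, all edges between them are red and $R[W]$ contains a spanning $K_{4,t}$; since $4\le t\le 6=\binom{4}{\lfloor 4/2\rfloor}$, Lemma~\ref{lem:Kab} supplies a non-trivial good orientation. Finally ${\rm ex}(B[W])={\rm ex}(K_4)+{\rm ex}(X)=2-k\ge -1$, where $k\le 3$ is the number of tree components in $X$, so $W$ is a reduction, contradicting Lemma~\ref{lem:no-reduction}.

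The main burden is the bookkeeping in the $s=4$ case: verifying that outside the single exceptional configuration $K_4\cup 7P_1$ one can always choose two or three small tree components whose orders sum to a value in $[4,6]$. The constant $6$ is tight because it equals $\binom{4}{\lfloor 4/2\rfloor}$, which is precisely what makes Lemma~\ref{lem:Kab} applicable to the bipartition $V(K_4)\mid V(X)$; everything else reduces to routine applications of the machinery already developed.
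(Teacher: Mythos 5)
Your proof is correct and follows essentially the same route as the paper: invoke Lemma~\ref{la:small-trees} to cap ${\rm ex}(B_1)\le 2$, so $s\in\{3,4\}$, use Lemma~\ref{la:no-two-non-trees} and the tree-order bound from Lemma~\ref{la:small-trees} to pin down the remaining components, and finish by either exhibiting a reduction (contradicting Lemma~\ref{lem:no-reduction}) or citing Lemma~\ref{la:Garner-list}. The only difference is in the $K_4$ case: the paper uses the condensed conclusion $k_0\ge{\rm ex}(B_1)+2=4$ of Lemma~\ref{la:small-trees} to conclude in one line that every tree component must be a $P_1$, whereas you re-derive that fact by explicit case analysis on $(a_1,a_2,a_3)$ and construction of the reduction; your case enumeration is complete and correct, just somewhat more verbose than necessary given that the machinery you cite already packages that bookkeeping.
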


\begin{proof}
Suppose to the contrary that $B$ contains a component $B_1$ that is a complete
graph of order $n_1 \geq 3$. By Lemma~\ref{la:small-trees} we have ${\rm ex}(B_1)\le 2$ and consequently $n_1\in\{3,4\}$.\\[1mm]
 If  $B_1=K_4$, then ${\rm ex}(B_1)=2$ and $B$ contains exactly $7$ tree components by Lemma \ref{la:no-two-non-trees}. 
By Lemma~\ref{la:small-trees} all tree components must be $P_1$ (otherwise $k_0<4$ in the lemma, which is a contradiction).
By Lemma \ref{la:Garner-list}, the graph $\overline{K_4 \cup 7K_1}$ has an orientation of diameter two, so it is not a counterexample, which is a contradiction.\\[1mm]
If  $B_1=K_3$, then ${\rm ex}(B_1)=0$ and $B$ contains exactly $5$ tree components by Lemma \ref{la:no-two-non-trees}. 
By Lemma~\ref{la:small-trees} all these tree components must be $P_1$ or $P_2$, so
we have $B=K_3 \cup aK_1 \cup bK_2$
for some nonnegative integers $a,b$ with $a+b=5$. But by Lemma 
\ref{la:Garner-list} all such graphs have an orientation of diameter two. So $G$ is not a counterexample, a contradiction.  
\end{proof}

\begin{lemma} \label{la:dumbbells}
In a minimal counterexample no component of $B$ is a proper dumbbell. 
\end{lemma}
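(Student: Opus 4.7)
The plan is to mirror the proof of Lemma~\ref{la:complete-graphs}. Suppose for contradiction that a component $B_1$ of $B$ is a proper dumbbell $D_{k,\ell}$. Since ${\rm ex}(D_{k,\ell}) = \binom{k}{2}+\binom{\ell}{2}+1-(k+\ell)$, combining Lemmas~\ref{la:no-large-component} and~\ref{la:small-trees} (which give $n(B_1)\le 6$ and ${\rm ex}(B_1)\le 2$) restricts $B_1$ to one of $D_{3,1}, D_{3,2}, D_{3,3}, D_{4,1}, D_{4,2}$, with respective excesses $0, 0, 1, 2, 2$. By Lemma~\ref{la:no-two-non-trees} the remaining components of $B$ form exactly ${\rm ex}(B_1)+5$ trees; by Lemma~\ref{la:small-trees} and Lemma~\ref{la:C5-complete-dumbbell}(a), each such tree is $P_1$, $P_2$ or $P_3$.

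For each candidate $B_1$ I would show that $B$ is either in the list of Lemma~\ref{la:Garner-list}, contradicting that $G$ is a counterexample, or admits a reduction $W$, contradicting Lemma~\ref{lem:no-reduction}. The candidate reduction will always be $W = V(B_1)\cup V(T_1)\cup\cdots\cup V(T_s)$ with $s\le {\rm ex}(B_1)+1$ (ensuring ${\rm ex}(B[W])\ge -1$), and the required non-trivial good orientation of $R[W]$ comes from applying Lemma~\ref{lem:Kab} or Lemma~\ref{lem:Kabsubgraph} to the partition with parts $X = V(T_1)\cup\cdots\cup V(T_s)$ and $Y = V(B_1)$. When Lemma~\ref{lem:Kab} fails I invoke Lemma~\ref{lem:Kabsubgraph} using the containments $D_{3,1}=K_3\boxplus K_1$, $D_{3,2}\trianglelefteq K_3\boxplus K_2$, and $D_{3,3}\trianglelefteq K_3\boxplus K_3$.

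Concretely: for $B_1\in\{D_{3,1},D_{3,2}\}$ either every tree has order at most two (covered by Lemma~\ref{la:Garner-list}(4)) or at least one tree is a $P_3$, in which case $W=V(B_1)\cup V(P_3)$ is a reduction via Lemma~\ref{lem:Kabsubgraph} with $a=3$. For $B_1=D_{3,3}$ the non-reducible configurations $D_{3,3}\cup 6P_1$ and $D_{3,3}\cup P_2\cup 5P_1$ are in Lemma~\ref{la:Garner-list}(3); any other configuration either contains a $P_3$, yielding the reduction $V(B_1)\cup V(P_3)$ by Lemma~\ref{lem:Kabsubgraph} with $a=3$, $b=6$, or contains at least two $P_2$'s, yielding $V(B_1)\cup V(2P_2)$ by Lemma~\ref{lem:Kab} with $a=4$, $b=6$. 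For $B_1\in\{D_{4,1},D_{4,2}\}$, if every tree is $P_1$ then $B$ appears in Lemma~\ref{la:Garner-list}(1); otherwise I pick a single non-$P_1$ tree together with just enough $P_1$'s to make $|X|\ge 4$ (using at most $3\le {\rm ex}(B_1)+1$ trees), and Lemma~\ref{lem:Kab} applies with $a=4$ and $b=n(B_1)\in\{5,6\}$.

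The main obstacle is the tightness of the inequality $b\le\binom{a}{\lfloor a/2\rfloor}$ in Lemma~\ref{lem:Kab}: for $n(B_1)\in\{5,6\}$ it forces $|X|\ge 4$, which rules out the one-tree reduction $V(B_1)\cup V(P_3)$ in the larger dumbbell cases and pushes the analysis into Lemma~\ref{lem:Kabsubgraph}, where one must verify the appropriate $D_{k,\ell}\trianglelefteq K_a\boxplus K_{b-a}$ containment for each remaining tree configuration while keeping the number of included trees within the bound dictated by ${\rm ex}(B_1)$.
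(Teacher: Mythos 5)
Your proposal is correct and follows the same overall strategy as the paper: restrict $B_1$ to $\{D_{3,1},D_{3,2},D_{3,3},D_{4,1},D_{4,2}\}$ via Lemmata~\ref{la:no-large-component} and~\ref{la:small-trees}, use Lemma~\ref{la:no-two-non-trees} to pin down the remaining components, and in each case either land in the list of Lemma~\ref{la:Garner-list} or exhibit a reduction. The only difference is one of packaging: for $D_{4,1},D_{4,2},D_{3,3}$ you build the reduction $W$ explicitly and check ${\rm ex}(B[W])\ge -1$ and the hypotheses of Lemma~\ref{lem:Kab}/\ref{lem:Kabsubgraph} by hand, whereas the paper more briefly observes that the presence of a non-singleton (respectively a $P_3$ or two $P_2$'s) forces $k_0\le 3$ (respectively $k_0\le 2$), contradicting the bound $k_0\ge{\rm ex}(B_1)+2$ from Lemma~\ref{la:small-trees}, which has the same reduction argument baked in. Your verifications of the containments $D_{3,1}=K_3\boxplus K_1$, $D_{3,2}\trianglelefteq K_3\boxplus K_2$, $D_{3,3}\trianglelefteq K_3\boxplus K_3$, and of the size bound $b\le\binom{a}{\lfloor a/2\rfloor}$, all check out; the one place your wording is slightly too narrow is ``together with just enough $P_1$'s'' in the $D_{4,j}$ case, since if $B$ happens to have fewer than two singleton components you must pad $X$ with $P_2$'s or $P_3$'s instead, but that substitution preserves both $|X|\ge 4$ and $s\le{\rm ex}(B_1)+1$, so the argument still goes through.
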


\begin{proof}
Assume that $B_1$ is a component of $B$ that is a proper dumbbell; then $n(B_1) \geq 4$,
and by Lemmata~\ref{la:no-large-component} and ~\ref{la:small-trees} we have $n(B_1)\leq 6$ and ${\rm ex}(B_1)\le 2$, and $B-B_1$ has no $P_4$ component.
Hence
$B_1\in\{D_{3,1},D_{4,1},D_{3,2},D_{4,2},D_{3,3}\}$. By Lemma \ref{la:no-two-non-trees}, $B-B_1$ has
exactly ${\rm ex}(B_1)+5$ other components that are all paths on at most three vertices.
We will examine each case grouped by ${\rm ex}(B_1)$.
\begin{enumerate}
\item $B_1\in\{D_{4,1},D_{4,2}\}$.
Then ${\rm ex}(B_1)=2$ and $B-B_1$ has exactly $7$ tree components 
by Lemma \ref{la:no-two-non-trees}. 
By Lemma \ref{la:Garner-list}, the graph $\overline{B_1\cup 7K_1}$ has an orientation of diameter two, 
therefore 
not all tree components of $B$ are singletons. We get $k_0\le 3$ and a contradiction in Lemma~\ref{la:small-trees}.
\item$B_1\in\{D_{3,1},D_{3,2}\}$. 
Then ${\rm ex}(B_1)=0$ and  $B-B_1$ has exactly five
tree components by Lemma \ref{la:no-two-non-trees}. 
Lemma~\ref{la:Garner-list} gives that $\overline{B_1 \cup aK_1 \cup bK_2}$ has a diameter two orientation
for all 
$a+b=5$, so at least one of the tree components is a $P_3$ . 
For $j\in\{1,2\}$, $D_{3,j}\trianglelefteq K_3\boxplus K_j$,
and by Lemma~\ref{lem:Kabsubgraph} $V(P_3)\cup V(B_1)$ is a reduction, which is again a contradiction.
\item $B_1=D_{3,3}$. Then ${\rm ex}(B_1)=1$.
By Lemma \ref{la:no-two-non-trees}, $B-B_1$ contains exactly $6$
components which are trees. By Lemma \ref{la:Garner-list}, the graphs
$\overline{D_{3,3} \cup 6K_1}$ and
$\overline{D_{3,3} \cup K_2 \cup 5K_1}$ have an orientation of diameter two. Hence $B-B_1$
contains a $P_3$ or two components that are $P_2$. We get  $k_0\le 2$ and a contradiction in Lemma~\ref{la:small-trees}.
\end{enumerate}
\end{proof}

\begin{lemma} \label{la:short-dumbbells}
In a minimal counterexample no component of $B$ is a proper short dumbbell. 
\end{lemma}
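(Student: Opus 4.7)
The plan is to follow the template of Lemma \ref{la:dumbbells} almost verbatim. First I would suppose for contradiction that some component $B_1$ of $B$ is a proper short dumbbell $S_{k,\ell}$, so by definition $\min(k,\ell)\geq 3$ and hence $n(B_1)=k+\ell-1\geq 5$. Combined with Lemma \ref{la:no-large-component} giving $n(B_1)\leq 6$, only two candidates remain: $S_{3,3}$, of order $5$ and excess $\binom{3}{2}+\binom{3}{2}-5=1$, and $S_{3,4}$, of order $6$ and excess $\binom{3}{2}+\binom{4}{2}-6=3$. Since Lemma \ref{la:small-trees} forces $\mathrm{ex}(B_1)\leq 2$, the case $S_{3,4}$ is immediately excluded, leaving $B_1=S_{3,3}$.

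Next I would invoke Lemma \ref{la:no-two-non-trees} to conclude that $B-B_1$ has exactly $\mathrm{ex}(B_1)+5=6$ components, all of which are trees. Lemma \ref{la:small-trees} further tells us that each such tree component has at most $\min(3,n(B_1)-1)=3$ vertices, so each is a $P_1$, $P_2$, or $P_3$, and moreover gives the quantitative bound $k_0\geq \mathrm{ex}(B_1)+2=3$, where $k_0$ is the minimum number of tree components whose union $F$ satisfies $4\leq n(F)\leq 6$. If some tree component were a $P_3$, then $F=P_3\cup P_1$ would achieve $k_0=2$, a contradiction; if two components were $P_2$'s, then $F=P_2\cup P_2$ would again achieve $k_0=2$, also a contradiction. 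Hence $B-B_1$ contains no $P_3$ and at most one $P_2$.

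The only remaining possibilities are therefore $B=S_{3,3}\cup 6P_1$ and $B=S_{3,3}\cup P_2\cup 5P_1$. But Lemma \ref{la:Garner-list}(3) asserts that both $\overline{S_{3,3}\cup 6K_1}$ and $\overline{S_{3,3}\cup K_2\cup 5K_1}$ admit orientations of diameter two, contradicting our assumption that $G$ is a counterexample. I do not expect any real obstacle here; once Lemmata \ref{la:small-trees}, \ref{la:no-two-non-trees} and \ref{la:Garner-list} are in hand, the argument is just a careful enumeration, and the only mildly subtle point is applying the single bound $k_0\geq 3$ twice, first to exclude a $P_3$ component and then to exclude having two $P_2$ components, which together with Lemma \ref{la:small-trees}'s size bound on tree components narrows the list just enough to invoke the precomputed diameter-two orientations in Lemma \ref{la:Garner-list}.
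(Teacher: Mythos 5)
Your proposal is correct and follows the paper's own argument essentially verbatim: reduce via Lemmata \ref{la:no-large-component} and \ref{la:small-trees} to $B_1 = S_{3,3}$, count tree components via Lemma \ref{la:no-two-non-trees}, use the $k_0 \geq 3$ bound from Lemma \ref{la:small-trees} to exclude a $P_3$ or two $P_2$'s among them, and dispatch the two surviving graphs with Lemma \ref{la:Garner-list}. The only cosmetic difference is that the paper runs the final step contrapositively (deducing from Lemma \ref{la:Garner-list} that there must be a $P_3$ or two $P_2$'s, which forces $k_0 = 2$, contradicting Lemma \ref{la:small-trees}), whereas you use $k_0 \geq 3$ first to narrow down to the two graph shapes and then invoke Lemma \ref{la:Garner-list}; the content is identical.
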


\begin{proof}
Assume that $B_1$ is a component of $B$ that is  a proper short dumbbell.
Then $5\le n(B_1)$. By Lemmata \ref{la:no-large-component} and ~\ref{la:small-trees}, 
$n(B_1) \leq 6$, ${\rm ex}(B_1)\le 2$, and no tree component of $B$ is a $P_4$. This gives that $B_1=S_{3,3}$, ${\rm ex}(B_1)=1$, and $B-B_1$ has exactly $6$ tree components. By  Lemma \ref{la:Garner-list}, both  $\overline{S_{3,3} \cup 6K_1}$ and
$\overline{S_{3,3} \cup K_2 \cup 5K_1}$ have diameter two orientations, so
 the components of $B$ include at least two $P_2$ or at least one $P_3$. This gives $k_0=2$ and a contradiction in Lemma~\ref{la:small-trees}.
\end{proof}

\begin{lemma} \label{la:5-cycle}
In a minimal counterexample 
no component of $B$ is a $5$-cycle.  
\end{lemma}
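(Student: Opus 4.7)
The plan is to mimic the pattern of Lemmas~\ref{la:complete-graphs}, \ref{la:dumbbells}, and \ref{la:short-dumbbells}: suppose for contradiction that $B$ contains a component $B_1 \cong C_5$, extract the exact structure of $B - B_1$ from the earlier structural lemmas, and then exhibit a reduction (or appeal directly to Lemma~\ref{la:Garner-list}) for every remaining possibility.

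First, since $n(C_5)=5$ and $m(C_5)=5$, we have ${\rm ex}(B_1)=0$. By Lemma~\ref{la:no-two-non-trees}, $B - B_1$ consists of exactly ${\rm ex}(B_1)+5 = 5$ components, all of which are trees, hence by Lemma~\ref{la:C5-complete-dumbbell}(a) each is a $P_i$ with $1 \leq i \leq 4$. Applying Lemma~\ref{la:small-trees} with $B_1=C_5$ (so $n(B_1)=5$ and $\min(3,n(B_1)-1)=3$) further restricts the tree components to $P_1$, $P_2$, or $P_3$.

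Next, if every tree component of $B - B_1$ is a $P_1$ or $P_2$, then $B = C_5 \cup aP_1 \cup bP_2$ with $a+b=5$, and Lemma~\ref{la:Garner-list}(4) asserts that $\overline{C_5 \cup aP_1 \cup bP_2}$ has a diameter-two orientation, contradicting that $G$ is a counterexample. Therefore $B$ must contain at least one $P_3$ tree component; call its vertex set $Z$.

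Finally, set $W = V(B_1) \cup Z$, so $|W|=8$, $m(B[W]) = 5 + 2 = 7$, and ${\rm ex}(B[W]) = -1$. Then $R[W]$ contains $K_{3,5}$ as a spanning subgraph with partite sets $X=Z$ (of size $a=3$) and $Y=V(B_1)$ (of size $b=5$), since $B$ has no edges between its components. To invoke Lemma~\ref{lem:Kabsubgraph} we need $B[Y] = C_5 \trianglelefteq K_3 \boxplus K_2$; this holds because if $K_3$ has vertices $a_1,a_2,a_3$, $K_2$ has vertices $b_1,b_2$, and the matching is $\{a_1b_1, a_2b_2\}$, then $b_1\,a_1\,a_3\,a_2\,b_2\,b_1$ is a $5$-cycle in $K_3 \boxplus K_2$. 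With $3 \le 3 \le 5 \le 6$, Lemma~\ref{lem:Kabsubgraph} produces a non-trivial good orientation of $R[W]$, so $W$ is a reduction. This contradicts Lemma~\ref{lem:no-reduction} and completes the proof. The only subtlety is verifying the containment $C_5 \trianglelefteq K_3 \boxplus K_2$; everything else is a direct application of the structural machinery already established.
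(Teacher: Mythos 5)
Your proof is correct and follows essentially the same route as the paper: deduce that $B-B_1$ consists of five tree components on at most three vertices via Lemmata~\ref{la:no-two-non-trees} and~\ref{la:small-trees}, rule out the all-$P_1$/$P_2$ cases with Lemma~\ref{la:Garner-list}, and use $C_5 \trianglelefteq K_3 \boxplus K_2$ with Lemma~\ref{lem:Kabsubgraph} to make $V(B_1)\cup V(P_3)$ a reduction. The only difference is that you spell out the embedding $C_5 \trianglelefteq K_3 \boxplus K_2$ and the excess computation explicitly, which the paper leaves implicit.
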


\begin{proof}
Assume that $B_1$ is a component of $B$ that is
a $5$-cycle. Then ${\rm ex}(B_1)=0$ and, by Lemmata \ref{la:no-two-non-trees} and~\ref{la:small-trees},
$B-B_1$ has exactly $5$ components which are trees on at most three vertices.
By Lemma \ref{la:Garner-list}, $\overline{C_5 \cup aP_2 \cup bP_1}$ has an orientation
of diameter two for all non-negative integers 
$a,b$ with $a+b=5$, so at least one of these tree components is a $P_3$.
As $P_3\leq K_3$ and $C_5\trianglelefteq K_3\boxplus K_2$, by Lemma \ref{lem:Kabsubgraph} $B_1\cup P_3$ forms a reduction, 
contradicting Lemma \ref{lem:no-reduction}.
\end{proof}

We are now ready to complete the proof of Theorem \ref{theo:main-result}. 

\begin{proof}
Suppose to the contrary that Theorem \ref{theo:main-result} is false.
Let $G$ be a minimal counterexample, that is a graph of minimum order
and minimum size for which the theorem does not hold. By Lemma \ref{la:Garner-list}, 
$n(G)\ge 8$ and
consequently $m(G)=n(G)-5$. 
By Lemma \ref{la:C5-complete-dumbbell},  every component
of $B$ that is not a tree is either a complete graph on at least three vertices,
a proper dumbbell, a proper short dumbbell, or a $5$-cycle.  By Lemmata~\ref{la:complete-graphs}, \ref{la:dumbbells}, \ref{la:short-dumbbells}, and ~\ref{la:5-cycle},
all components of $B$ must be trees, and by Lemmata~\ref{lem:treecomp} and ~\ref{la:C5-complete-dumbbell}
$B=aP_1\cup bP_2\cup cP_3\cup dP_4$ for some $a+b+c+d=5$.
But then Lemma \ref{la:Garner-list} gives that $G$ has a diameter two orientation, a contradicton.
\end{proof}

\section{Open Problem}

In Theorem \ref{theo:main-result}, we show that in graph of given order $n$ we need at least $\binom{n}{2}-n+5$ edges to guarantee the existence of an orientation of diameter two. It is natural to ask the same question for any given value of $d$: In a graph of order $n$, over all bridgeless graphs, how many edges do we need at least to guarantee the existence of an orientation of diameter at most $d$?

\end{document}